\documentclass[11pt]{article}

\usepackage{amsmath,amsthm,amssymb,color}
\usepackage{graphicx}
\usepackage{tikz-cd}

\textwidth=16cm \textheight=23cm
\parskip=\medskipamount
\topmargin=-1cm \oddsidemargin=0cm

\setlength\parindent{0pt}

\def\fpd#1#2{{\displaystyle\frac{\partial #1}{\partial #2}}}

\def\clift#1{#1^{\scriptscriptstyle{\mathrm{C}}}}
\def\hlift#1{#1^{\scriptscriptstyle{\mathrm{H}}}}
\def\vlift#1{#1^{\scriptscriptstyle{\mathrm{V}}}}

\def\R{\mathbb{R}}

\def\onehalf{{\textstyle\frac12}}

\def\oneq{{\textstyle\frac14}}
\def\threeq{{\textstyle\frac34}}

\font\frak=eufm10 scaled\magstep1
\def\goth#1{\hbox{{\frak #1}}}
\def\g{\goth{g}}
\def\la{\g}

\def\cinfty#1{C^{\scriptscriptstyle\infty}(#1)}
\def\vectorfields#1{\goth{X}(#1)}

\def\sode{{\sc sode}}
\def\sodes{{\sc sode}s}

\def\T{{\mathbf T}}

\theoremstyle{plain}

\newtheorem{proposition}{Proposition}
\newtheorem{definition}{Definition}

\begin{document}

\title{Conjugate points for systems of second-order ordinary differential equations}

\author{ S.\ Hajd\'u and T.\ Mestdag  \\[2mm]
{\small Department of Mathematics,  University of Antwerp,}\\
{\small Middelheimlaan 1, 2020 Antwerpen, Belgium}\\
{\small Email: sandor.hajdu@uantwerpen.be, tom.mestdag@uantwerpen.be} 
}

\date{}

\maketitle

\begin{abstract}
We recall the notion of Jacobi fields, as it was  extended to systems of second-order ordinary differential equations. Two points along a base integral curve are conjugate if there exists a non-trivial Jacobi field along that curve that vanishes on both points. Based on arguments that involve the eigendistributions of the Jacobi endomorphism, we discuss conjugate points for a certain generalization (to the current setting) of locally symmetric spaces. Next, we study conjugate points along relative equilibria of Lagrangian systems with a symmetry Lie group. We end the paper with some examples and applications.
\vspace{3mm}

\textbf{Keywords:} second-order ordinary differential equations, Jacobi fields, conjugate points, symmetry, relative equilibrium.

\vspace{3mm}

\textbf{2010 Mathematics Subject Classification:} 34A26,
37J15,
53C21,
53C22,
70G65.
\end{abstract}

\section{Introduction}

Beyond question Jacobi fields play a fundamental role in Riemannian geometry \cite{DoCarmo}, and also in Finsler geometry \cite{BCS2} they have proven their extreme usefulness. The geodesic spray of a Riemannian or Finsler metric is but one of many examples of a so-called `semispray', or `\sode\ vector field'. These are vector fields on a tangent manifold whose integral curves can be associated to a system of second-order ordinary differential equations. 
 It is probably less known that the concept of Jacobi fields has  been extended to the context of \sodes\ (see e.g.\ \cite{CM,CGM,Foulon,Geoff,Bucataru}). 
The idea remains the same: a Jacobi field measures the infinitesimal variation of a 1-parameter family of solution curves of the \sode.   
The main observation to keep in mind is that, in the generalized Jacobi equation at hand,  the Levi-Civita connection and the curvature of the Riemannian metric are replaced by the (more general) covariant dynamical derivative $\nabla$ and the Jacobi endomorphism $\Phi$ of the \sode.

After recalling the calculus of tensor fields along the tangent bundle projection, and after introducing the above mentioned $\nabla$ and $\Phi$, we recall the notion of a Jacobi field in section~\ref{sec3}. We will show that sprays can be understood as those \sodes\ which posses the trivial Jacobi fields $\dot c$ and $t\dot c$ (Proposition~\ref{Prop3}). 

We then turn our attention to the concept of conjugate points. Given an initial point of a base integral curve  of a \sode, these are points (further along the same curve) which have the property that there exists a Jacobi field that vanishes on both the initial point and there. We show that this notion can be generalized to  \sodes, and relate it in Proposition~\ref{Prop6} to the critical points of the exponential map (as it was introduced for \sodes\ in \cite{MDM}). 

Since in practical applications it is not always possible to compute the exponential map in an explicit form, we need to devise other techniques to find conjugate points. In Proposition~\ref{basicprop} we give a very general method, based on the availability of a constant and positive eigenfunction of the Jacobi endomorphism, and of a parallel vector field along a given base integral curve of the \sode. We then apply the method in two different situations.

Both of these applications will require \sodes\ that satisfy $[\nabla\Phi,\Phi] =0$ (or, satisfy this property when restricted to some eigendistribution of $\Phi$). This condition is quite familiar in the context of \sodes. For example, it is one of the conditions for a \sode\ to belong to ``Case II'' of the ``Inverse problem of Lagrangian mechanics'' (see e.g.\ \cite{CPST}), and it is one of many conditions for a \sode\ to be ``separable'' (see e.g.\ \cite{sep}).
First, in Section~\ref{symspace}, we discuss conjugate points for a certain generalization to the current setting of locally symmetric Riemannian spaces (Proposition~\ref{Prop11}). In this situation the eigenfunctions are first integrals of the \sode. Next, in Section~\ref{sec6}, we study conjugate points for \sodes\ with a symmetry Lie group. We show that in this case the eigenfunctions of the Jacobi endomorphism are invariant functions (Proposition~\ref{Prop13}). They remain, for this reason, constant along relative equilibria. This enables us to give sufficient conditions for the existence of conjugate points along a relative equilibrium (Proposition~\ref{jacobiRE}).

We end the paper with some examples and applications to surfaces of revolution, the free rigid body and the canonical connection on a Lie group. In each of these examples, we  link our results to those in the literature. 

\section{Calculus along the tangent bundle projection}

Let $M$ be a manifold. We start with a short survey of the so-called calculus of tensor fields
along the tangent bundle projection $\tau: TM\to M$, as introduced in \cite{MaCaSaI,MaCaSaII}. For a short exposition, see e.g.\ \cite{Sarlet}.

A {\em vector field along $\tau$} is a  section of the pullback bundle
$\tau^*TM \to TM$. We will write $\vectorfields \tau$ for ${\rm Sec}(\tau^*TM)$, from now on. Such a section can also be thought of as a map $X: TM \to TM$ with
the property that $\tau\circ X = \tau$. Any vector field $Y$ on $M$ induces a (so-called) `basic' vector field $X=Y\circ\tau$ along $\tau$. In what follows, we will often simply write $Y$, even when we mean its interpretation $Y\circ\tau$ as a vector field along $\tau$.

Let $(q^i)$ be local coordinates on $M$, and $(q^i,{\dot q}^i)$ be its induced natural coordinates on $TM$. In general, a vector field $X$ along $\tau$ can locally be expressed as
\[
X=X^i(q,\dot q)\fpd{}{q^i} \in\vectorfields\tau,
\]
where $\fpd{}{q^i}$ are the coordinate vector fields on $M$, in their intepretation as vector fields along $\tau$. For example, we may always view the identity $\dot q\mapsto \dot q$ in a canonical way as a vector field along $\tau$. If we denote the correspoding section as $\T$, then
\[
\T={\dot q}^i \fpd{}{q^i}.
\]

Vector fields along $\tau$ are in 1-1 correspondence with vertical vector fields on $TM$: each $X\in\vectorfields\tau$ can be vertically lifted to  $\vlift{X} \in\vectorfields{TM}$, given by
\[
\vlift X= X^i\fpd{}{{\dot q}^i}.
\]
In particular, $\vlift{\T}=\Delta$, the Liouville vector field.

In similar fashion as for vector fields along $\tau$, we will speak below of {\em tensor fields along $\tau$}. 

 If $Y=Y^i(q)\partial/\partial q^i$ is a vector field on
$M$, its complete lift $\clift{Y}$ is the following vector field on
$TM$:
\[
\clift Y= Y^i \fpd{}{q^i} + \fpd{Y^i}{q^j}{\dot q}^j\fpd{}{{\dot
q}^i}.
\]
The relations between the brackets of complete and
vertical lifts of vector fields $Y_1$ and $Y_2$ on $M$ are:
\[
[{\clift Y_1},{\clift Y_2}] = [Y_1,Y_2\clift], \qquad [{\clift Y_1},{\vlift Y_2}] =
[Y_1,Y_2\vlift] \qquad\mbox{and}\qquad [{\vlift Y_1},{\vlift Y_2}] = 0.
\]

A {\em second-order differential
equation field} $\Gamma$ (from now on \sode, in short) is a vector field on $TM$ with the property that all its integral curves are lifted curves $\dot c(t)$ of curves $c(t)$ in $M$ (the so-called base integral curves of $\Gamma$). A \sode\ is  locally given by
\[
\Gamma={\dot q}^i\fpd{}{q^i}+f^i\fpd{}{{\dot q}^i}.
\]

It  can be
used to define the horizontal lift $\hlift{X} \in \vectorfields{TM}$ of
$X\in\vectorfields\tau$:
\[
\hlift X= X^i \left(\fpd{}{q^i} - \Gamma^j_i \fpd{}{{\dot
q}^j}\right), \qquad \Gamma^j_i =-\onehalf \fpd{f^j}{{\dot q}^i}.
\]
Any vector field $Z$ on $TM$ can then be decomposed into a horizontal and
vertical component:\ $Z={\hlift X_1} + {\vlift X_2}$, for  $X_1,X_2 \in
\vectorfields\tau$. In case $Y$ is a vector field on $M$,
the three lifts are related as follows:
\[
\hlift Y =\onehalf (\clift Y - [\Gamma,\vlift Y]).
\]

The properties of a \sode\ $\Gamma$ that are of interest to us can often be derived from an analysis of its {\em Jacobi endomorphism} $\Phi$ and its {\em dynamical covariant derivative} $\nabla$. These two important concepts can be defined  by considering the Lie bracket of $\Gamma$ with either  horizontal or vertical lifts. For $X\in\vectorfields{\tau}$, these brackets take the form
\[
[\Gamma,\vlift X] = -\hlift X + (\nabla X\vlift)
\qquad\mbox{and}\qquad [\Gamma,\hlift X]= (\nabla X\hlift) +
(\Phi(X)\vlift).
\]
The operator $\Phi$ is a type (1,1) tensor field along $\tau$.  The operator $\nabla$, on the other hand, acts
as a derivative on $\vectorfields\tau$, in the sense that for
$f\in\cinfty{TM}$ and $X\in\vectorfields\tau$, 
\[
\nabla(fX) = f\nabla X + \Gamma(f)X.
\] 

The coordinate  expressions for  $\nabla$ and $\Phi$ are
\[
\nabla \fpd{}{q^j} = \Gamma_j^i \fpd{}{q^i}, \qquad \Phi\left( \fpd{}{q^j}\right)= \Phi^i_j \fpd{}{q^i} = \left(-\fpd{f^i}{q^j} - \Gamma^k_j\Gamma^i_k - \Gamma(\Gamma^i_j)\right)\fpd{}{q^i}.
\]

In what follows it will be advantageous to distinguish between the concepts as above introduced, and their restrictions to a specific (lifted) curve in $TM$. A vector field  along a curve $c$ is a map
$W:\R\to TM$ with $\tau(W(t)) = c(t)$. We will denote the set of such vector fields by $\vectorfields c$.

For a given base integral curve $c$ of $\Gamma$ and a vector field $X\in\vectorfields\tau$, we may define such a vector field $X_c$ along $c$ by means of the map $X_c:t\in\R\mapsto X({\dot c}(t))$, since by definition $\tau(X({\dot c}(t))) = \tau({\dot c}(t)) = c(t)$. 

For any $v\in T_mM$, we may consider the endomorphism $\Phi_v: T_mM \to T_mM$. The collection of those for $v={\dot c}(t)$ can be interpreted as an operator $\Phi_c$ that maps vector fields along $c$ to vector fields along $c$. When $W(t)=W^i(t) \fpd{}{q^i}\bigg\rvert_{c(t)}\in\vectorfields c$ then
\[
\Phi_c(W(t)) = \Phi^i_j({\dot c}(t)) W^j(t) \fpd{}{q^i}\bigg\rvert_{c(t)} \in\vectorfields c.
\] 
Likewise, by the relation
\[
\nabla_c W (t) = \left(\frac{d}{dt}W^i(t) + \Gamma^i_j({\dot c}(t)) W^j(t) \right)\fpd{}{q^i}\bigg\rvert_{c(t)}
\]
we define an operator $\nabla_c: \vectorfields{c} \to \vectorfields{c}$ with the property
\[
\nabla_c(\mu(t)W(t)) = \dot \mu(t) W(t) + \mu(t) \nabla_c W(t), \qquad \mu \in \cinfty{\R}.
\]

With these definitions and notations, it is clear that for any $X\in\vectorfields \tau$, 
\[
 \Phi(X)({\dot c}(t)) = \Phi_c(X_c(t))\qquad \mbox{and}\qquad  \nabla X({\dot c}(t)) = \nabla_c X_c(t).
\]

These two concepts are extensions (to the current setting) of more familiar objects in e.g.\ Riemannian geometry. When $g$ is a Riemannian metric, $\Gamma$ its corresponding geodesic (quadratic) spray and $D$ its Levi-Civita connection then
\[
\Phi_c (X_c(t)) = R(X_c(t),{\dot c(t)}){\dot c}(t) \qquad \mbox{and} \qquad \nabla_c X_c(t) = D_{{\dot c}} X ({\dot c}(t)), 
\]
where $R(Y_1,Y_2)Y_3=D_{Y_1}D_{Y_2} Y_3 - D_{Y_2}D_{Y_1} Y_3 - D_{[Y_1,Y_2]} Y_3$ stands for the curvature of $D$.

\section{Jacobi fields and conjugate points for \sodes} \label{sec3}

In the paper \cite{CM} (see also \cite{CGM}) the notion of a Jacobi field  has been extended to the context of  \sodes. It is based on the notion of a variational vector field.

\begin{definition}
	A 1-parameter family of integral curves of a vector field $Y\in\vectorfields{M}$ is a map $\zeta: ]-\epsilon,\epsilon[\times I\subset \R^2\to M$ such that for every $s\in]-\epsilon,\epsilon[$ the curve $\zeta_s:I\to M$, given by $\zeta_s(t):=\zeta(s,t)$ is an integral curve of $Y$. The vector field $Z$ along $\zeta_0$ defined by $Z(t)=\frac{\partial\zeta}{\partial s}(0,t)$ is said to be the variation vector field defined by the 1-parameter family.
\end{definition}

We follow \cite{CGM} to give an infinitesimal characterization of such a vector field: 
	A vector field $Z$ along an integral curve $\zeta_0$ of $Y$ is the infinitesimal variation defined by a 1-parameter family of integral curves of $Y$ if and only if $\mathcal{L}_YZ(t)=0$ for all $t\in I$. For more equivalent conditions we refer to Proposition 2.3 of \cite{CGM}.

We now consider the case, when the variational vector field $Z(t)$ is constructed w.r.t a \sode\ $\Gamma$ on $TM$ along one of its integral curves $\zeta_0$. In this case  - taking into account that the integral curves of $\Gamma$ are all lifted curves - the variation $\zeta(s,t)$ can be written as $\zeta(s,t)=\frac{\partial\gamma}{\partial t}(s,t)$, where $\gamma(s,t)$ is a 1-parameter family of base integral curves of the \sode. If we denote by $W(t)$ the variational vector field of the base family, that is 
\[
W(t) = \frac{\partial \gamma}{\partial s}(0,t),
\]
then $Z(t)=W^c(t)$. We are now ready to define a (generalized) Jacobi field.

\begin{definition}
	Let $c$ be a base integral curve of a \sode\ $\Gamma$. A Jacobi field along $c$ is a vector field $J(t)$ along $c$, whose complete lift is a variational vector field along the integral curve $\dot c$ by integral curves of $\Gamma$.
\end{definition}
According to Theorem 2.7 of \cite{CGM}, a vector field along $c$ is a Jacobi field if and only if it satisfies the (generalized) Jacobi equation
\[
\nabla_c\nabla_c J(t) + \Phi_c(J(t)) = 0.
\]

It is clear that if $X\in \vectorfields{\tau}$ is such that $\nabla\nabla X +\Phi(X)=0$, then $J(t)=X_c(t)=X({\dot c}(t))$ is a Jacobi field for any choice of $c$.

When below we refer to `the Riemannian case', we mean the situation where $\Gamma$ is the (quadratic) geodesic spray of a Riemannian metric. We remark that, in the case when in addition $I$ is a compact interval (or the manifold is geodesically complete), the variational vector fields along a geodesic through geodesics are in 1-1 correspondence with Jacobi fields. (See  Theorems 10.1  and  10.4 of \cite{JohnRiem}). 

We also know that  in that case $\dot c$ and $t\dot c$ are always Jacobi fields (see e.g.\ \cite{DoCarmo}). For arbitrary \sodes, however, this will not always be the case, as we now show. 

A \sode\ $\Gamma$ is said to be a {\em spray} if $[\Delta,\Gamma]=\Gamma$, where $\Delta=\vlift\T$ is the Liouville vector field. A  coordinate calculation will easily confirm that $\Gamma$ is a spray if and only if $\Gamma=\hlift\T$.

\begin{proposition} \label{Prop2}
A \sode\ $\Gamma$ is a spray if and only if $\nabla\T = 0$ and $\Phi(\T) = 0$.
\end{proposition}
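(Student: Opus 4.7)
The plan is to leverage the characterization of a spray given just above the statement, namely that $\Gamma$ is a spray if and only if $\Gamma=\hlift\T$. Both implications are then short consequences of the two defining Lie-bracket relations $[\Gamma,\vlift X]=-\hlift X+\vlift{(\nabla X)}$ and $[\Gamma,\hlift X]=\hlift{(\nabla X)}+\vlift{(\Phi(X))}$ applied to $X=\T$.

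For the forward implication, I would assume $\Gamma=\hlift\T$ (so $[\Delta,\Gamma]=\Gamma$, with $\Delta=\vlift\T$). Applying the first bracket relation with $X=\T$, $[\Gamma,\Delta]=-\hlift\T+\vlift{(\nabla\T)}=-\Gamma+\vlift{(\nabla\T)}$; comparing with $[\Gamma,\Delta]=-\Gamma$ forces $\vlift{(\nabla\T)}=0$, hence $\nabla\T=0$. Then the second relation with $X=\T$ reads $0=[\Gamma,\Gamma]=[\Gamma,\hlift\T]=\hlift{(\nabla\T)}+\vlift{(\Phi(\T))}=\vlift{(\Phi(\T))}$, whence $\Phi(\T)=0$.

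For the converse, I would first establish the auxiliary identity
\[
\Gamma=\hlift\T+\vlift{(\nabla\T)}
\]
valid for every \sode. This is where the genuine work lies: one verifies it in the natural coordinates $(q^i,\dot q^i)$ using $\Gamma^j_i=-\onehalf\fpd{f^j}{\dot q^i}$ together with $\nabla\T=\dot q^j\nabla(\partial/\partial q^j)+\Gamma(\dot q^j)\partial/\partial q^j=(\dot q^j\Gamma^i_j+f^i)\partial/\partial q^i$, so that $\vlift{(\nabla\T)}=\Gamma-\hlift\T$. Once this identity is in hand, the hypothesis $\nabla\T=0$ immediately yields $\Gamma=\hlift\T$, which by the reformulation quoted above is exactly the spray property. (Note that $\Phi(\T)=0$ is actually not needed for this direction; by the forward part it is a consequence of $\nabla\T=0$ alone, so the proposition really encodes two characterizations rather than one.)

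The only genuine obstacle is the coordinate verification of $\Gamma=\hlift\T+\vlift{(\nabla\T)}$. Everything else is a bookkeeping exercise with the bracket formulas defining $\nabla$ and $\Phi$, which reduce the statement to the single identification $\Gamma=\hlift\T\Longleftrightarrow\nabla\T=0$.
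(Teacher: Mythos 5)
Your proof is correct, and your forward direction is essentially the paper's: both evaluate the two defining bracket relations at $X=\T$ and use the characterization $\Gamma=\hlift{\T}$ of a spray to read off $\nabla\T=0$ and then $\Phi(\T)=0$. The converse is where you genuinely diverge. The paper keeps both hypotheses: from $\nabla\T=0$ and $\Phi(\T)=0$ it gets $[\Gamma,\hlift{\T}]=0$, writes $\hlift{\T}=\Gamma+\vlift{X}$ (the difference of two \sodes\ being vertical), computes $[\Gamma,\vlift{X}]=-\hlift{X}+\vlift{(\nabla X)}=0$, and concludes $X=0$ from the vanishing of the horizontal component --- an intrinsic, coordinate-free argument. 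You instead verify in coordinates the identity $\Gamma=\hlift{\T}+\vlift{(\nabla\T)}$, valid for every \sode\ (your computation of $\nabla\T=(\dot q^j\Gamma^i_j+f^i)\,\partial/\partial q^i$ checks out), and read off that $\nabla\T=0$ alone forces $\Gamma=\hlift{\T}$. Your route buys a slightly sharper statement --- $\Phi(\T)=0$ is redundant in the converse, being a consequence of $\nabla\T=0$ via the second bracket relation --- and the identity $\Gamma=\hlift{\T}+\vlift{(\nabla\T)}$ is worth having explicitly, since it is also what makes the paper's terse passage from $-\Gamma=-\hlift{\T}+\vlift{(\nabla\T)}$ to $\nabla\T=0$ transparent. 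The price is one coordinate computation where the paper stays entirely within the bracket calculus.
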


\begin{proof}
When $\Gamma$ is a spray, then  \[
-\Gamma = [\Gamma,\vlift \T] = -\hlift \T + (\nabla \T \vlift)
\qquad\mbox{and}\qquad 0 = [\Gamma,\hlift \T]= (\nabla \T\hlift) +
(\Phi(\T)\vlift).
\]
From the first relation we obtain $\nabla\T = 0$ and then, from the second, $\Phi(\T) = 0$.

Conversely, when both $\nabla\T = 0$ and $\Phi(\T) = 0$, then $0 = [\Gamma,\hlift \T]$. Since the difference between two \sodes\ is always vertical, we may write $\T^H=\Gamma +\vlift X$. But then also $0 = [\Gamma,\vlift X] = -\hlift X + (\nabla X \vlift)$. Therefore $\hlift X=0$, and thus $X=0$. We conclude that $\Gamma=\hlift\T$.\end{proof}

\begin{proposition} \label{Prop3}
The vector fields $\dot c$ and $t\dot c$ along $c$ are both Jacobi fields for each base integral curve $c$ of a \sode\ $\Gamma$ if and only if  $\Gamma$ is a spray.
\end{proposition}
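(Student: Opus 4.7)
The plan is to express both Jacobi conditions as pointwise equations along the base integral curve and then apply Proposition~\ref{Prop2}. Since $\dot c(t) = \T({\dot c}(t)) = \T_c(t)$, the Jacobi equation for $J=\dot c$ reads
\[
\nabla_c\nabla_c\T_c + \Phi_c(\T_c) = 0.
\]
Using the derivation property $\nabla_c(\mu W) = \dot\mu W + \mu \nabla_c W$ with $\mu(t) = t$, I would compute $\nabla_c(t\T_c) = \T_c + t\nabla_c\T_c$ and then $\nabla_c\nabla_c(t\T_c) = 2\nabla_c\T_c + t\nabla_c\nabla_c\T_c$, while $\Phi_c(t\T_c) = t\Phi_c(\T_c)$. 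The Jacobi equation for $J = t\dot c$ therefore reduces to
\[
2\nabla_c\T_c + t\bigl(\nabla_c\nabla_c\T_c + \Phi_c(\T_c)\bigr) = 0.
\]

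Assume now that $\dot c$ and $t\dot c$ are Jacobi fields for every base integral curve $c$ of $\Gamma$. Combining the two displayed equations gives $\nabla_c\T_c = 0$ for all $t$, and, reinserting, also $\Phi_c(\T_c) = 0$. Since every point $v \in TM$ lies on some base integral curve of $\Gamma$ (the projection of the integral curve of $\Gamma$ through $v$), the identities $\nabla\T({\dot c}(t)) = \nabla_c\T_c(t)$ and $\Phi(\T)({\dot c}(t)) = \Phi_c(\T_c(t))$ noted earlier promote these statements to $\nabla\T = 0$ and $\Phi(\T) = 0$ on all of $TM$. By Proposition~\ref{Prop2}, this is equivalent to $\Gamma$ being a spray.

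For the converse, if $\Gamma$ is a spray then Proposition~\ref{Prop2} yields $\nabla\T = 0$ and $\Phi(\T) = 0$, which restrict to $\nabla_c\T_c = 0$ and $\Phi_c(\T_c) = 0$ along any base integral curve $c$. Substituting these into the two displayed equations shows that both are trivially satisfied. There is no real obstacle in this argument; the only subtle point is observing that the ``for every $c$'' quantifier is exactly what lets us pass from the pointwise Jacobi equations along curves to the global vanishing of $\nabla\T$ and $\Phi(\T)$ on $TM$, so that Proposition~\ref{Prop2} becomes applicable.
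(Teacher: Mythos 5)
Your proposal is correct and follows essentially the same route as the paper: the identity $\nabla_c\nabla_c(t\dot c)=2\nabla_c\dot c+t\nabla_c\nabla_c\dot c$ combined with the Jacobi equation for $\dot c$ to extract $\nabla_c\dot c=0$ and then $\Phi_c(\dot c)=0$, followed by the passage from ``for every $c$'' to $\nabla\T=0$ and $\Phi(\T)=0$ and an appeal to Proposition~\ref{Prop2}. The only difference is presentational: you make explicit the observation that every $v\in TM$ lies on a lifted base integral curve, which the paper leaves implicit.
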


\begin{proof}
If a \sode\ is a spray then  $\nabla \T=0$ and $\Phi(\T)=0$. Since $\T_{c} = \dot c$, also $\nabla_c \dot c = 0$ and $\Phi_c(\dot c) =0$. Thus,  $J(t)=\dot c(t)$ is a Jacobi field.

Let now $J(t)=t \dot c$ (there is no corresponding $X\in\vectorfields \tau$ for which this is $X_c$), then $\nabla_c J(t) = \dot c $ and $\nabla_c \nabla_c J(t) = \nabla_c \dot c(t) = 0$. Moreover, also $\Phi_c (J(t)) =t\Phi_c(\dot c(t))=0$.

Conversely, if both $\dot c$ and $t\dot c$ are Jacobi fields, then $\nabla_c\nabla_c\dot c + \Phi_c(\dot c) = 0$ and 
\[
0=\nabla_c\nabla_c(t\dot c) + t \Phi_c(\dot c) =    \nabla_c (t\nabla_c(\dot c)) + \nabla_c \dot c + t \Phi_c(\dot c)   =    t\nabla_c \nabla_c(\dot c) + \nabla_c\dot c + \nabla_c \dot c + t \Phi_c(\dot c)   = 2 \nabla_c\dot c.
\] 
Since this holds for each $c$, we get $\nabla \T = 0$. Moreover, since $\dot c$ is a Jacobi field, we obtain from $\nabla_c\dot c = 0$  and the Jacobi equation that $\Phi_c(\dot c) = 0$, which leads to $\Phi(\T) = 0$.
\end{proof}

Besides Riemannian metrics, also Finsler metrics have the property that their geodesic equations are governed by a spray. As we will recall in the examples below, there exist, however, sprays whose base integral curves can never be the geodesics of a Riemannian  or a Finsler metric.

Consider a frame $\{e_i(t)\}$ along $c$. 
Any vector field along $c$ can be written as $J(t) =J^i(t)e_i(t)$. This vector field will be a Jacobi field if it satisfies an equation of the type
\[
{\ddot J}^i(t) + A^i_j(t){\dot J}^j(t)+ B^i_j(t){J}^j(t)=0,
\] 
i.e.\ a linear equation. This equation is locally determined by the knowledge of an initial value and an initial velocity. In particular the zero-solution is the only solution with zero initial value and zero initial velocity, and a linear combination of  solutions is again a solution. 

When $J_1(t)$ and $J_2(t)$ are two Jacobi fields, then so is also $aJ_1(t)+bJ_2(t)$. We show that they are linearly independent if and only if the $2n$-vectors $(J^i_1(0),{\dot J}^i_1(0))$ and $(J^i_2(0),{\dot J}^i_2(0))$ in $\R^{2n}$ (containing the initial data) are linearly independent. Indeed, suppose that  $aJ_1(t)+bJ_2(t) = 0$. Since it is a solution it must be the zero-solution. But then,  the initial values $(aJ^i_1(0)+bJ^i_2(0),a{\dot J}^i_1(0)+b{\dot J}^i_2(0))$ are just $(0,0)$. Due to the assumption on linear independence we get that $a=b=0$.

We may therefore conclude:
\begin{proposition} \label{prop4}
For any \sode\ there exist at most $2n$ linearly independent Jacobi fields along each base integral curve.
\end{proposition}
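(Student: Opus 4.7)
The plan is to make rigorous the dimension count that the paragraphs just preceding the proposition already sketch, using standard linear ODE theory applied to the Jacobi equation written in components with respect to a frame along $c$.

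First, I would fix any (smooth) frame $\{e_i(t)\}$ along the base integral curve $c$ and expand an arbitrary vector field along $c$ as $J(t)=J^i(t)e_i(t)$. Applying $\nabla_c$ twice in this frame converts the intrinsic Jacobi equation $\nabla_c\nabla_c J+\Phi_c(J)=0$ into a linear second-order system
\[
\ddot J^i(t)+A^i_j(t)\dot J^j(t)+B^i_j(t)J^j(t)=0,
\]
where the coefficient matrices $A^i_j$ and $B^i_j$ are smooth functions of $t$ determined by the Christoffel-type coefficients $\Gamma^i_j(\dot c(t))$, the components $\Phi^i_j(\dot c(t))$, and the derivatives of the frame. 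The only point worth checking carefully here is that $\nabla_c$, being a derivation on $\vectorfields{c}$ (as recorded just after its definition), commutes correctly with the componentwise derivative so that the system is genuinely of the displayed linear second-order form; this is a direct calculation from the Leibniz rule for $\nabla_c$.

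Second, I would invoke the standard existence and uniqueness theorem for linear ODEs: the map
\[
J\longmapsto \bigl(J^i(0),\dot J^i(0)\bigr)\in\R^{2n}
\]
is a linear isomorphism from the vector space of solutions of the Jacobi equation onto $\R^{2n}$. In particular, the solution space has dimension exactly $2n$. The paragraph immediately preceding the proposition already verifies that linear independence of a finite collection of Jacobi fields is equivalent to linear independence of the corresponding initial data vectors in $\R^{2n}$, so no more than $2n$ Jacobi fields along $c$ can be linearly independent.

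I do not anticipate a genuine obstacle: the argument is essentially the translation of a geometric statement into the classical theorem on linear ODEs. The only mild subtlety is making clear that linearity of the Jacobi equation is a frame-independent fact (since $\nabla_c\nabla_c+\Phi_c$ is a linear operator on $\vectorfields{c}$), so that the bound $2n$ does not depend on the auxiliary choice of frame $\{e_i(t)\}$ used to write things in coordinates.
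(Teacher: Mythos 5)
Your proposal is correct and follows essentially the same route as the paper: the authors also fix a frame $\{e_i(t)\}$ along $c$, write the Jacobi equation as a linear second-order system $\ddot J^i + A^i_j\dot J^j + B^i_j J^j=0$, and use uniqueness of solutions to identify linear independence of Jacobi fields with linear independence of their initial data in $\R^{2n}$. Your added remarks on the Leibniz rule for $\nabla_c$ and on frame independence are sound but not needed beyond what the paper already records.
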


We can now extend the following definition of e.g.\ \cite{DoCarmo} to the current context of \sodes.

\begin{definition}
Let $c$ be a base integral curve of a \sode\ $\Gamma$, through $m_0=c(0)$. If there exists a Jacobi field $J(t)$, not identically zero, with the property that $J(0) = J(t_1)=0$, then the point $m_1=c(t_1)$ is called a conjugate point of $m_0$ along $c$. 
\end{definition}

The maximum number of such linearly independent fields is called the {\em multiplicity} of the conjugate point. 
If we fix $J(0)=0$ then, in view of Proposition~\ref{prop4}, there exists at most $n$ linearly independent Jacobi fields (determined by the linear independence of their initial velocity). 

We have shown in Proposition~\ref{Prop3} that, when $\Gamma$ is a spray (as it will be in the case when $\Gamma$ is the geodesic spray of a Riemannian or Finslerian metric),  $J(t) = t\dot c(t)$ is always a Jacobi field. Since it never  vanishes  at any $t\neq 0$ (meaning that it can not be used to give a conjugate point) we may conclude that   for a spray the multiplicity of a conjugate point is {\em at most} $n-1$ (see  \cite{DoCarmo} for this statement in the Riemannian case).

In \cite{other,MDM} the notion of the exponential map is extended to the context of \sodes. We will use this map to characterize conjugate points. The construction is based on the following proposition:

\begin{proposition} \label{Prop5} {\rm \cite{MDM}}
	Let $\Gamma$ be a \sode\ on $M$. Then, for every point $m_0\in M$ there exists a sufficiently small positive number $t_1$ and two open subsets $U,\tilde{U}$ of $M$ with $m_0\in U\subseteq \tilde{U}\subseteq M$, such that for all $m_1\in U$ there exists a unique solution of $\Gamma$, 
	\[
	c_{m_0m_1}:[0,t_1]\to\tilde{U},
	\]
	satisfying
	\[
	c_{m_0m_1}(0)=m_0,\qquad c_{m_0m_1}(t_1)=m_1.
	\]
\end{proposition}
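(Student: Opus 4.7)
To prove Proposition~\ref{Prop5}, I would work in local coordinates $(q^i)$ centered so that $m_0$ has coordinates $q_0 = 0$, write the \sode\ as $\ddot q^i = f^i(q, \dot q)$, and define
$$
F(v, t) := q(t; v),
$$
where $q(\cdot; v)$ is the unique local solution with $q(0) = 0$ and $\dot q(0) = v$. The plan is to invert $v \mapsto F(v, t_1)$ for a small fixed $t_1$, thereby recovering the initial velocity that sends $m_0$ to a prescribed target $m_1$ in time $t_1$, and then set $c_{m_0 m_1}(t) := F(v(m_1), t)$.

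A direct application of the inverse function theorem to $F(\cdot, t_1)$ fails because $\partial F / \partial v$ collapses to $0$ as $t_1 \to 0$. To bypass this, I would use the integral form of Taylor's theorem to write
$$
F(v, t) = t v + t^2 \rho(v, t), \qquad \rho(v, t) := \int_0^1 (1 - \sigma)\, f\bigl(q(\sigma t; v), \dot q(\sigma t; v)\bigr)\, d\sigma,
$$
with $\rho$ jointly smooth in $(v, t)$ by smooth dependence of solutions of ODEs on initial data. The rescaled map $\tilde F(v, t) := v + t\,\rho(v, t)$ then satisfies $\tilde F(v, 0) = v$, so the auxiliary map $G(v, t) := (\tilde F(v, t), t)$ has identity Jacobian at $(0, 0)$ and is, by the inverse function theorem, a diffeomorphism between open neighborhoods $\mathcal N_1, \mathcal N_2 \subset \R^n \times \R$ of the origin.

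Choosing $t_1 > 0$ small enough that $(0, t_1)$ lies in both $\mathcal N_1$ and $\mathcal N_2$, the slices $V := \{v : (v, t_1) \in \mathcal N_1\}$ and $W := \{w : (w, t_1) \in \mathcal N_2\}$ are open neighborhoods of $0 \in \R^n$, and $\tilde F(\cdot, t_1) : V \to W$ is a diffeomorphism. Since $F(\cdot, t_1) = t_1 \tilde F(\cdot, t_1)$, its image $U := F(V, t_1) = t_1 W$ is an open neighborhood of $q_0 = m_0$, and $F(\cdot, t_1) : V \to U$ is itself a diffeomorphism. For each $m_1 \in U$ there is then a unique $v(m_1) \in V$ with $F(v(m_1), t_1) = m_1$, and $c_{m_0 m_1}(t) := F(v(m_1), t)$, $t \in [0, t_1]$, is the required base integral curve. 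To fix the larger neighborhood $\tilde U$ in which these curves are constrained to lie, I would shrink $V$ and $t_1$ further; by continuous dependence of solutions on initial conditions the entire family $\{F(v, t) : v \in V,\, t \in [0, t_1]\}$ can be made to lie in any preassigned neighborhood of $m_0$, which also secures uniqueness among solutions whose image remains in $\tilde U$.

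The main obstacle is precisely the degeneration of $F$ at $t = 0$, where its derivative in $v$ is zero so that the inverse function theorem cannot be applied naively. The rescaling $F = t\, \tilde F$ absorbs this singularity into the prefactor $t$, and the theorem is then applied to the smooth two-parameter family $\tilde F$. The only delicate point in this step is the joint smoothness of $\rho$ across $t = 0$, which is exactly what the integral form of Taylor's remainder supplies. For a spray this construction reduces to the classical exponential map, but the argument above is valid for an arbitrary \sode.
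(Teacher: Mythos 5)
First, note that the paper itself offers no proof of Proposition~\ref{Prop5}: it is imported verbatim from \cite{MDM}, so there is no in-paper argument to compare yours against. Judged on its own terms, your existence argument is sound and is the standard route for this kind of statement: the Taylor expansion with integral remainder gives $F(v,t)=tv+t^2\rho(v,t)$ with $\rho$ jointly smooth, the rescaled map $\tilde F=v+t\rho$ restores invertibility at $t=0$, and the slice argument through $G(v,t)=(\tilde F(v,t),t)$ correctly produces, for small $t_1$, a diffeomorphism $F(\cdot,t_1):V\to U=t_1W$ onto an open neighbourhood of $m_0$. (Minor point: $DG(0,0)$ is not the identity unless $f(m_0,0)=0$; it is block-triangular with invertible diagonal blocks, which is all you need.) This is precisely the device that replaces the homogeneity $\exp_{t}(v)=\exp_1(tv)$ available for sprays but absent for a general \sode.

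The genuine gap is in the uniqueness claim. What your construction delivers is uniqueness of the connecting solution among those whose initial velocity lies in the small set $V\subset T_{m_0}M$. The proposition asserts uniqueness among all solutions $c:[0,t_1]\to\tilde U$, and your closing sentence --- that confining the constructed family to $\tilde U$ ``also secures uniqueness among solutions whose image remains in $\tilde U$'' --- does not follow: confining the \emph{image} of a curve to a small neighbourhood does not confine its \emph{initial velocity} to $V$ for a non-homogeneous \sode. For example, for $\ddot q=-\dot q^3$ on $\R$ the solution with $\dot q(0)=v_0$ satisfies $q(t)=\big(\sqrt{1+2v_0^2t}-1\big)/v_0$, which stays within distance roughly $\sqrt{2t_1}$ of the origin for \emph{every} large $v_0$; so the set of admissible initial velocities compatible with the constraint $c([0,t_1])\subseteq\tilde U$ is unbounded, and injectivity of the endpoint map on that whole set is not provided by the inverse function theorem applied near $v=0$. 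To close the gap you must either strengthen the hypothesis under which uniqueness is claimed (e.g.\ uniqueness among solutions whose \emph{lifted} curve $\dot c$ stays in a prescribed neighbourhood of $0_{m_0}$ in $TM$, which is how such statements are usually made rigorous), or supply a separate argument showing that every solution $[0,t_1]\to\tilde U$ issuing from $m_0$ necessarily has $\dot c(0)\in V$.
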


Let us denote by $\varphi^\Gamma:D^{\Gamma}\subseteq \R\times TM\to TM$ the flow of $\Gamma$, where $D^{\Gamma}$ is the open subset of $\R\times TM$ given by
\[
D^{\Gamma}=\{ (t,v)\in\R\times TM \phantom{a}\rvert\phantom{a} \varphi^\Gamma(\cdot,v)\phantom{a}\text{is defined at least in}\phantom{a} [0,t]\}.
\]
Now for any $t_1\geq 0$ we can define the open subset $D^{\Gamma}_{(t_1,m_0)}$ of $T_{m_0}M$ by
\[
D^{\Gamma}_{(t_1,m_0)}=\{v\in T_{m_0}M \phantom{a}\rvert\phantom{a} (t_1,v)\in D^{\Gamma}\}.
\]
For sufficiently small $t_1$, this set is non-empty. Finally, let $\mathcal{U}\subseteq D^{\Gamma}_{(t_1,m_0)}$ be the open subset of $T_{m_0}M$ given by
\[
\mathcal{U}=\{\dot{c}_{m_0m_1}(0)\subseteq D^{\Gamma}_{(t_1,m_0)}\phantom{a}\rvert\phantom{a}m_1\in U \}.
\]
We can now define the exponential mapping at the point $m_0$ for time $t_1$ by
\begin{eqnarray*}
	\exp^{\Gamma}_{(t_1,m_0)}:\mathcal{U}\subseteq T_{m_0}M\rightarrow M, \\
	\exp^{\Gamma}_{(t_1,m_0)}(v)=(\tau_M\circ\varphi^{\Gamma})(t_1,v).
\end{eqnarray*}
We remark that, when $t_1$ is sufficiently small in the sense of Proposition~\ref{Prop5},  we can also define $\exp^{\Gamma}_{(t,m_0)}$ for all $t\in [0,t_1]$, and in \cite{other} (Proposition~2.3) it is shown that the domain of the exponential map increases when the parameter $t_1$ decreases. In particular $\exp^{\Gamma}_{(0,m_0)}$ is a constant mapping. It is shown in \cite{MDM}, that the exponential map is a diffeomorphism for all times $t_1$. However, we may also consider a bigger domain $\hat{U}_{(t_1,m_0)}$ for the exponential map at the point $m_0$ for time $t_1$ defined by
\[
\hat{U}_{(t_1,m_0)}:=\left\{v\in T_{m_0}M\bigg\rvert\,\, \text{the unique solution with}\,\, c(0)=m_0,\,\, \dot{c}(0)=v\,\, \text{is defined for time}\,\, t_1  \right\},
\]
on which the exponential map is still differentiable, but not necessarily bijective  (see e.g. \cite{other} for more details). We will now relate this map to Jacobi fields. 

Consider the following 1-parameter family of base integral curves of $\Gamma$:
\[
	\gamma : S\times [0,t_1]\rightarrow M,\phantom{a} \gamma(s,t):=\exp^{\Gamma}_{(t,m_0)}(v+sw),
\]
where $v,w\in \mathcal{U}$ and $S\subseteq\R$ is such that the right-hand side of the above is well-defined. Then, $\gamma(s,t)$ is a variation of base integral curves of $\Gamma$, and $J(t)=\frac{\partial \gamma}{\partial s}\rvert_{s=0}$ is a Jacobi field satisfying $J(0)=0$ and $\nabla_c J(0)=w$. We can always construct such a family, given the initial data: a point on the manifold, a base integral curve, and tangent vectors $v,w\in\mathcal{U}$.

When a Jacobi field $J(t)$ along a base integral curve $c$ is given 'a priori', we can  construct a variation of $c$ defined by a 1-parameter family of base integral curves by setting $v:=J(0)$ and $w:=\nabla_c J(0)$. Its variational vector field, in the sense of the previous paragraph,  gives back $J(t)$.  

Note, that not all 1-parameter families of solutions can be given in the form of the above $\gamma$, since solutions of a \sode\ are, in general, not invariant under a parameter transformation. We have therefore the following diagram:
\begin{center}
	\begin{tikzcd}
	\{\text{general variation} \}
	\arrow[r, description] & J(t)\arrow[r] 
	& \{\gamma \phantom{a}\text{in the above form}\}\arrow[l, bend left]   \\
	\end{tikzcd}
\end{center}

We are now able to characterize conjugate points with the help of the exponential map. In the statement, we use the vertical lift  $w^{\textsc{v}}_v \in T_vTM$  of $w\in T_mM$ to $v\in T_mM$. It is defined by
\[
w^{\textsc{v}}_v(f) = \frac{d}{dt} \bigg\rvert_{t=0} \Big(f(v+tw)\Big).
\] 

\begin{proposition} \label{Prop6}
Let $c$ be a base integral curve of $\Gamma$ with $\dot{c}(0)=v$ that joins $m_0=c(0)$ with $m_1=c(t_1)$ in $U$. Then, $m_0$ and $m_1$ are conjugate points if and only if there exists a tangent vector $w\in\mathcal{U}$ such that $w^{\textsc{v}}_v$ is a critical point of the exponential map at $v$.
\end{proposition}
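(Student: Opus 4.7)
The plan is to reuse the explicit 1-parameter family $\gamma(s,t)=\exp^{\Gamma}_{(t,m_0)}(v+sw)$ constructed in the paragraph preceding the proposition, and to identify its variational field with the differential of the exponential map applied to the vertical lift of $w$. The linearity of the Jacobi equation (combined with the existence/uniqueness given initial position and velocity, as noted in Proposition~\ref{prop4}) will promote this identification into a bijection between Jacobi fields along $c$ with $J(0)=0$ and vertical vectors $w^{\textsc{v}}_v\in T_v(T_{m_0}M)$, which then translates the conjugacy condition directly into the critical-point condition.

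Concretely, for $w\in T_{m_0}M$ such that $v+sw$ lies in $\mathcal{U}$ for small $s$, set $J(t):=\frac{\partial\gamma}{\partial s}\big|_{s=0}$. By the discussion preceding the proposition, $J$ is a Jacobi field along $c$ with $J(0)=0$ (because $\exp^{\Gamma}_{(0,m_0)}$ is the constant map at $m_0$) and $\nabla_c J(0)=w$. The key identity is
\[
J(t_1)\;=\;\frac{d}{ds}\bigg|_{s=0}\exp^{\Gamma}_{(t_1,m_0)}(v+sw)\;=\;d\exp^{\Gamma}_{(t_1,m_0)}\big|_{v}\bigl(w^{\textsc{v}}_v\bigr),
\]
which is immediate from the chain rule together with the definition of $w^{\textsc{v}}_v$ recalled just above the statement.

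For the ``if'' direction, suppose $w\in\mathcal{U}$ is such that $w^{\textsc{v}}_v$ lies in the kernel of $d\exp^{\Gamma}_{(t_1,m_0)}|_v$ (with $w\neq 0$, so that this kernel is non-trivial). The identity above gives $J(t_1)=0$; since $\nabla_c J(0)=w\neq 0$, uniqueness of solutions of the Jacobi equation forces $J\not\equiv 0$, so $m_1$ is conjugate to $m_0$. Conversely, given a non-trivial Jacobi field $\tilde J$ along $c$ with $\tilde J(0)=\tilde J(t_1)=0$, set $w:=\nabla_c \tilde J(0)$; this is non-zero by the same uniqueness argument. The variational vector field of $\gamma(s,t):=\exp^{\Gamma}_{(t,m_0)}(v+sw)$ is then a Jacobi field with initial data $(0,w)$ identical to that of $\tilde J$, so the two agree. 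Evaluating at $t_1$ gives $0=\tilde J(t_1)=d\exp^{\Gamma}_{(t_1,m_0)}|_v(w^{\textsc{v}}_v)$, so that $v$ is a critical point of $\exp^{\Gamma}_{(t_1,m_0)}$ in the direction $w^{\textsc{v}}_v$.

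The main points to keep honest are (i) the domain issue, ensuring that $v+sw\in\mathcal{U}$ for small $s$ so that the family $\gamma$ is well-defined (which is immediate from openness of $\mathcal{U}$), and (ii) the interpretation of the critical-point condition, namely reading ``$w^{\textsc{v}}_v$ is a critical point of the exponential map at $v$'' as $w^{\textsc{v}}_v\in\ker d\exp^{\Gamma}_{(t_1,m_0)}|_v$ with $w\neq 0$. Once these are pinned down, the rest is a routine consequence of Jacobi-equation uniqueness and the chain rule.
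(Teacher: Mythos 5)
Your proof is correct and takes essentially the same approach as the paper: the paper's entire argument is the chain-rule identity $J(t_1)=T_v\exp^{\Gamma}_{(t_1,m_0)}(w^{\textsc{v}}_v)$ applied to the unique Jacobi field with $J(0)=0$ and $\nabla_c J(0)=w$, exactly as in your key identity. You simply spell out the two directions and the non-triviality and domain caveats that the paper leaves implicit.
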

\begin{proof}
We know that there exists a unique Jacobi field with $J(0)=0$ and $\nabla_c J(0)=w$. We show that this field vanishes at $t_1$ if and only if $w^{\textsc{v}}_v$ is a critical point of the exponential map at $v$. Indeed, by the chain rule we have
	\[
	J(t_1)=\frac{\partial}{\partial s}\bigg\rvert_{s=0}\exp^{\Gamma}_{(t_1,m_0)}(v+sw)= T_{v}\exp^{\Gamma}_{(t_1,m_0)}(w^{\textsc{v}}_v).
	\]
	\end{proof}

We will give an example of this proposition, when we discuss the canonical connection on a Lie group (Section~\ref{seccancon}).
The above proposition is, however, mainly theoretical in nature. In practice it is often difficult to find an explicit expression for the exponential map, for a given \sode. 
It is therefore of interest to construct some other methods to find conjugate points.

\section{A method  to find conjugate points}

In this section, we need the concept of a  distribution on the pullback bundle $\tau^*TM$. With a {\em $d$-dimensional distribution $D$ along $\tau$} we mean a smooth choice of a $d$-dimensional subspace $D(v)$ of $T_{\tau(v)} M$ for every $v\in TM$. We say that a
vector field $X$ along $\tau$ belongs to $D$ (and write  $X \in D$)  if $X(v) \in D(v)$ for each $v\in TM$. See e.g.\  \cite{sep} for more details on this concept.

Let $\Gamma$ be a \sode. Its Jacobi endomorphism $\Phi$ (as a tensor field along $\tau$) is said to be diagonalizable if for each $v\in TM$ the
endomorphism $\Phi_v:T_{\tau(v)} M \to T_{\tau(v)} M$ is diagonalizable, there exist (locally)
smooth functions $\lambda: TM \to \R$  (called eigenfunctions) such that $\lambda(v)$ is an eigenvalue of
$\Phi_v$ and the rank of $\lambda{\rm Id}-\Phi$ is constant. In this case, the eigenspaces of $\Phi$ define
distributions along $\tau$, called the eigendistributions of $\Phi$ and denoted by $D_{\lambda}$,
i.e.\ $D_\lambda = {\rm ker}(\lambda{\rm Id}-\Phi)$.

From now on, we will always assume that the Jacobi endomorphism is  diagonalizable. 
For a given base integral curve $c$ of $\Gamma$, we may consider $\Phi_c$ and the restrictions to a specific given base integral curve $c$ will be denoted by $\lambda_c(t) :=\lambda(\dot c(t))$ and $D_{\lambda_c} := \cup_t D_{\lambda_c(t)}$.

Remark that, when $\Gamma$ is a spray, we know from Proposition~\ref{Prop2} that $\Phi(\T)=0$. In case of a spray, we therefore always have the constant eigenvalue $\lambda=0$.

The following observation lies at the basis of most of what follows:
\begin{proposition} \label{basicprop}
Let $c$ be a base integral curve of a \sode\ $\Gamma$, through $m_0=c(0)$.  If 
\begin{enumerate}
\item[(1)] $\Phi$ has an eigenfunction $\lambda(v)$ that remains  constant and strictly positive  along $c$, i.e.\ $\lambda_c(t)=\lambda_0>0$ for all $t$,
\item[(2)]  there exists a non-vanishing  vector field  $V(t) \in \vectorfields c$ along $c$ that lies in $D_{\lambda_c}$, and which is such that $\nabla_c V(t) = 0$, 
\end{enumerate}
then the  points $c\big(\frac{k\pi}{\sqrt{\lambda_0}}\big)$  are conjugate to $m_0$, for all $k\in{\mathbb Z}$.
\end{proposition}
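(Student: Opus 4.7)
The natural strategy is to construct explicitly a non-trivial Jacobi field along $c$ that vanishes both at $t=0$ and at $t=k\pi/\sqrt{\lambda_0}$, using the parallel eigenvector field $V(t)$ as the building block. The ansatz I would try is
\[
J(t) = \mu(t)\, V(t),
\]
for a scalar function $\mu:\mathbb{R}\to\mathbb{R}$ to be determined. The hope is that, because $V$ is both $\nabla_c$-parallel and lies in the eigendistribution $D_{\lambda_c}$, the Jacobi equation will reduce to a single ordinary differential equation for $\mu$ alone.

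The computation I expect to carry out is short. Using the derivation property of $\nabla_c$ together with $\nabla_c V = 0$ gives $\nabla_c J = \dot\mu\, V$ and then $\nabla_c\nabla_c J = \ddot\mu\, V$. On the other hand, because $V(t)\in D_{\lambda_c(t)}$ and $\lambda_c(t)\equiv \lambda_0$, one has $\Phi_c(V(t)) = \lambda_0 V(t)$, hence $\Phi_c(J(t)) = \lambda_0 \mu(t) V(t)$. Substituting these into the generalised Jacobi equation
\[
\nabla_c\nabla_c J(t) + \Phi_c(J(t)) = 0
\]
and using that $V(t)$ is nowhere zero, I would conclude that $J$ is a Jacobi field if and only if
\[
\ddot\mu(t) + \lambda_0\,\mu(t) = 0.
\]

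Since $\lambda_0>0$, the general solution is $\mu(t)=A\cos(\sqrt{\lambda_0}\,t)+B\sin(\sqrt{\lambda_0}\,t)$. The boundary condition $\mu(0)=0$ forces $A=0$, and choosing $B=1$ yields $\mu(t)=\sin(\sqrt{\lambda_0}\,t)$, which vanishes precisely at the points $t_k:=k\pi/\sqrt{\lambda_0}$, $k\in\mathbb{Z}$. The resulting $J(t)=\sin(\sqrt{\lambda_0}\,t)\,V(t)$ is non-trivial (its derivative at $t=0$ equals $\sqrt{\lambda_0}\,V(0)\neq 0$), so each $c(t_k)$ is conjugate to $m_0$ along $c$ in the sense of the definition.

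There is no real obstacle here; the proposition is essentially a rigidity statement that, once the two structural hypotheses are in place, the Jacobi equation decouples into a one-dimensional harmonic oscillator. The only subtlety I would be careful about is checking that $J$ is genuinely not the zero section, which reduces to noting that $\sin(\sqrt{\lambda_0}\,t)$ is not identically zero and $V(t)$ is assumed non-vanishing for all $t$; these together guarantee $J$ is not the trivial Jacobi field, as required by the definition of a conjugate point.
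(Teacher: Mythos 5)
Your proposal is correct and follows essentially the same route as the paper: the ansatz $J(t)=\mu(t)V(t)$ with $\nabla_c V=0$ and $\Phi_c(V)=\lambda_0 V$ reduces the Jacobi equation to $\ddot\mu+\lambda_0\mu=0$, solved by $\sin(\sqrt{\lambda_0}\,t)$. Your extra remark verifying that $J$ is non-trivial is a welcome touch the paper leaves implicit.
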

\begin{proof}
We show that, under the above assumptions, the Jacobi equation has a solution of the type $J(t)=\varpi(t)V(t)$. 
Since
\[
\nabla_c(\nabla_c J(t)) = \nabla_c ( \dot\varpi(t)V(t) + \varpi(t)\nabla_c V(t))= \ddot \varpi(t)V(t) + \dot\varpi(t)\nabla_c V(t) = \ddot \varpi(t)V(t),
\]
we find after substitution  in the Jacobi equation  (given that $V(t)\neq 0$):
\[
{\ddot \varpi}(t) +\lambda_0 \varpi(t)=0.
\] 
Since $\lambda_0 >0$, the solutions with  $\varpi(0)=0$ are given by $\varpi(t) = A\sin(\sqrt{\lambda_0}t)$. In that case $J\big(\frac{k\pi}{\sqrt\lambda_0}\big)=0$, for all $k\in {\mathbb Z}$.
\end{proof}

Remark that the assumption (2) is not automatically satisfied. Let $c$ be a base integral curve of $\Gamma$, through $c(0)=m_0$ and with ${\dot c}(0)=v\in T_{m_0}M$. Given any other vector $w \in T_{m_0}M$ we may define a vector field $V(t)$ along $c$ with initial value $w$ by demanding that $\nabla V(t)=0$. Indeed, this relation is just the initial value problem
\[
{\dot V}^i(t) + \Gamma^i_j({\dot c}(t))V^j(t)=0, \qquad V^i(0)=w^i.
\] 
However, even when $w$ is an eigenvector of $\Phi_{v}$, there is no guarantee that $V(t)$ remains an eigenvector of $\Phi_c$. Moreover, it is not clear if the eigenfunction $\lambda(v)$ remains constant along $\dot c(t)$.

In the next paragraphs we wish to examine some cases where the assumptions of this proposition are fulfilled simultaneously. Two such cases come immediately to mind: 

(1) Systems with $\Gamma(\lambda) =0$, for all eigenfunctions. Then, each $\lambda_c(t)$ is constant along each of the base  integral  curves of $\Gamma$. If we fix a base integral curve $c$ and if we can find a vector field $V(t)$ with $\nabla_cV(t)=0$, the conditions of the proposition are fulfilled for positive eigenfunctions.

(2) Systems where a  vector field $X\in\vectorfields\tau$ along $\tau$ exists with $\nabla X=0$. Then $\nabla_cX_c=0$ for each base integral curve $c$. If we now fix a base integral curve $c$, which has a  constant positive eigenvalue $\lambda_c$ along $c$, then the conditions of the proposition are again satisfied.

After some further preliminaries below, we give an application of each of the above cases in the next two sections.

Consider a \sode\  $\Gamma$. Let $D_\lambda$ be the eigendistribution of an eigenfuction $\lambda(v) \in \cinfty{TM}$ of $\Phi$. We will often simply write `eigen vector field' instead of the more formal  `eigen vector field along $\tau$'. Since we assume that $\Phi$ is diagonalizable, we know that an eigenbasis of $\vectorfields{\tau}$ exists.

\begin{proposition} Let $\lambda$ be an eigenfunction of $\Phi$. The following statements are equivalent:
\begin{enumerate}
\item $[\nabla\Phi,\Phi](D_\lambda) = 0$,
\item $\nabla\Phi(D_\lambda) \subset D_\lambda$,
\item  $\nabla D_\lambda \subset D_\lambda$.
\end{enumerate}
\end{proposition}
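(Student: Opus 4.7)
The plan is to establish the cyclic chain $(1) \Leftrightarrow (2) \Leftrightarrow (3)$, leveraging the fact that although $\nabla$ is not tensorial on $\vectorfields{\tau}$, the combination $(\nabla\Phi)(X) := \nabla(\Phi X) - \Phi(\nabla X)$ \emph{is} a genuine $(1,1)$-tensor along $\tau$. First I would verify this tensoriality by a short calculation using $\nabla(fX) = f\nabla X + \Gamma(f)X$: the two $\Gamma(f)\Phi(X)$ contributions cancel, and one is left with $(\nabla\Phi)(fX) = f(\nabla\Phi)(X)$. Once this is in place, $[\nabla\Phi,\Phi]$ may be read as the commutator of two endomorphisms of $\vectorfields{\tau}$.

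For the step $(1) \Leftrightarrow (2)$, I would take an arbitrary $X \in D_\lambda$ and use $\Phi(X) = \lambda X$ together with this tensoriality to pull the scalar $\lambda$ out of $(\nabla\Phi)(\lambda X)$, obtaining
\[
[\nabla\Phi,\Phi](X) = \lambda (\nabla\Phi)(X) - \Phi((\nabla\Phi)(X)) = -(\Phi - \lambda\,\mathrm{Id})\bigl((\nabla\Phi)(X)\bigr).
\]
Since $D_\lambda = \ker(\Phi - \lambda\,\mathrm{Id})$, the vanishing of the left-hand side for all $X \in D_\lambda$ is exactly condition (2).

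For $(2) \Leftrightarrow (3)$, I would expand $(\nabla\Phi)(X)$ for $X \in D_\lambda$ using the definition of $\nabla\Phi$ and the derivation rule for $\nabla$; the computation yields a term $\Gamma(\lambda)X$ that is manifestly in $D_\lambda$, plus a remainder of the form $-(\Phi - \lambda\,\mathrm{Id})(\nabla X)$. So (2) reduces to requiring $(\Phi - \lambda\,\mathrm{Id})(\nabla X) \in D_\lambda$, that is, $(\Phi - \lambda\,\mathrm{Id})^2(\nabla X) = 0$, for every $X \in D_\lambda$.

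The one step that I expect to require a little care is the passage from this degree-two vanishing to the degree-one condition $\nabla X \in D_\lambda$, i.e.\ the identity $\ker(\Phi - \lambda\,\mathrm{Id})^2 = \ker(\Phi - \lambda\,\mathrm{Id})$. This is the only place where I expect to invoke a nontrivial hypothesis, but it is granted by the standing assumption that $\Phi$ is diagonalizable pointwise on $TM$: on each fibre, the eigenspace decomposition is complete, so a generalized eigenvector collapses to a true eigenvector. Fibrewiseness lets the argument pass to distributions along $\tau$ without change, and the chain of equivalences is complete.
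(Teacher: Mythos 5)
Your proposal is correct and follows essentially the same route as the paper: the tensoriality of $\nabla\Phi$ gives $[\nabla\Phi,\Phi](X)=-(\Phi-\lambda\,\mathrm{Id})\bigl((\nabla\Phi)(X)\bigr)$ for $X\in D_\lambda$, yielding $(1)\Leftrightarrow(2)$, and the Leibniz expansion $(\nabla\Phi)(X)=\Gamma(\lambda)X+\lambda\nabla X-\Phi(\nabla X)$ together with diagonalizability yields $(2)\Leftrightarrow(3)$, exactly as in the paper. The only differences are cosmetic: you express the diagonalizability step as the kernel collapse $\ker(\Phi-\lambda\,\mathrm{Id})^2=\ker(\Phi-\lambda\,\mathrm{Id})$ where the paper decomposes $\nabla X$ in an eigenbasis, and by making both steps genuine biconditionals you bypass the paper's separate direct computation of $(3)\Rightarrow(1)$.
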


\begin{proof}  $(1)\Rightarrow (2)$. The bracket in  $[\nabla\Phi,\Phi]$ stands for the commutator. For a given $X \in D_\lambda$ it means that
\[
0 = (\nabla\Phi)(\Phi(X)) - \Phi((\nabla\Phi)(X)) = \lambda (\nabla\Phi)(X) - \Phi((\nabla\Phi)(X)),
\]  
or:  $(\nabla\Phi)(X)\in D_\lambda$.

$(2)\Rightarrow (3)$. Assume now that $(\nabla\Phi)(D_\lambda)\subset D_\lambda$. Since 
\[
(\nabla\Phi)(X)=\nabla(\Phi(X)) - \Phi(\nabla X) = \Gamma(\lambda) X + \lambda \nabla X - \Phi(\nabla X),  
\]
we may also conclude that  
\[
\lambda \nabla X - \Phi(\nabla X) \in D_\lambda.
\] If we take an eigenbasis $\{X_a,X_i\}$, where $X_a\in D_\lambda$, and  where $X_i$ lie in eigenspaces $D_{\lambda_i}$ (with then $\lambda_i\neq \lambda$) then we can decompose $\nabla X = A^aX_a + A^i X_i$. The previous relation then leads to 
\[
 (\lambda-\lambda_i) A^i X_i \in D_\lambda
\]
Since $\lambda\neq\lambda_i$, and since all $X_i$ are linearly independent we may conclude that all $A^i$ vanish. In conclusion, $\nabla X \in D_\lambda$ for all $X\in D_\lambda$.

$(3)\Rightarrow (1)$. When $\nabla X \in D_\lambda$ for all $X\in D_\lambda$, then 
\begin{eqnarray*}
[\nabla\Phi,\Phi](X) &=& (\nabla\Phi)(\Phi(X)) - \Phi((\nabla\Phi)(X)) = \lambda (\nabla\Phi)(X) - \Phi(\nabla(\Phi(X))) -\Phi(\Phi(\nabla X))\\
&=& \lambda (\nabla(\Phi(X)) - \Phi(\nabla X)) - \Phi(\nabla(\lambda X)) -\Phi(\lambda\nabla X)
\\
&=& \lambda (\nabla(\lambda X) - \Phi(\nabla X)) - \Phi(\Gamma(\lambda)+\lambda \nabla X) -\lambda^2\nabla X 
\\
&=& \lambda (\Gamma(\lambda) X + \lambda \nabla X - \lambda\nabla X) - \Phi(\Gamma(\lambda)X+\lambda \nabla X) -\lambda^2\nabla X
\\&=&0.
\end{eqnarray*}
\end{proof}

\begin{proposition} \label{nablaXzeroold}
Let $\lambda$ be an eigenfunction of $\Phi$ such that $D_\lambda$ is one-dimensional. Then the following statements are equivalent:
\begin{enumerate} \item  $[\nabla\Phi,\Phi](D_\lambda) = 0$,
\item $D_\lambda$ contains an eigen vector field $X$ for which $\nabla X=0$.
\end{enumerate}
\end{proposition}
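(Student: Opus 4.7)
My plan is to build directly on the preceding proposition, which already gives the equivalence of condition (1) with $\nabla D_\lambda \subset D_\lambda$. Under this reformulation the task collapses to: for a one-dimensional eigendistribution, closure under $\nabla$ is equivalent to the existence of a non-vanishing parallel section inside it.

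For $(2)\Rightarrow(1)$ I would exploit the one-dimensionality directly. Since $D_\lambda$ is a line subbundle of $\tau^*TM$ and $X$ is a non-vanishing section, any $Y\in D_\lambda$ can locally be written as $Y = fX$ for some $f\in\cinfty{TM}$. The Leibniz-type rule
\[
\nabla(fX) = \Gamma(f)X + f\nabla X
\]
then collapses to $\nabla Y = \Gamma(f)\, X \in D_\lambda$, so $\nabla D_\lambda \subset D_\lambda$, which by the previous proposition is exactly (1).

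For the converse $(1)\Rightarrow(2)$ I would pick a local non-vanishing section $Y$ of $D_\lambda$, which exists since $D_\lambda$ is a line bundle. By hypothesis $\nabla Y \in D_\lambda$, so there is a smooth function $g\in\cinfty{TM}$ with $\nabla Y = gY$. I then search for a parallel section of the form $X = fY$ with $f$ nowhere vanishing: the Leibniz rule reduces $\nabla X = 0$ to the scalar equation $\Gamma(f)+fg=0$, and the substitution $f=e^h$ turns this into the linear transport equation $\Gamma(h) = -g$ along the flow of $\Gamma$.

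The only non-routine step is solving this last equation. Since $\Gamma$ is a nowhere-vanishing vector field on the slit tangent bundle (its $q$-component is $\dot q^i\,\partial/\partial q^i$), the straightening theorem for vector fields provides local coordinates in which $\Gamma = \partial/\partial t$, and $h$ is then produced by prescribing its value on a transversal hypersurface and integrating $-g$ along the flow. The resulting $f = e^h$ is smooth and nowhere zero, so $X = fY$ is the desired parallel eigen vector field in $D_\lambda$. I expect the main subtlety to lie in matching this local solvability argument to the paper's conventions about eigen vector fields along $\tau$; the algebraic content itself is essentially forced by the dimension-one hypothesis.
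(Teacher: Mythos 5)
Your proposal is correct and follows essentially the same route as the paper: both directions reduce, via the preceding proposition, to $\nabla D_\lambda\subset D_\lambda$, the forward implication comes down to solving $\Gamma(\mu)+\mu g=0$ for a scaling factor, and the converse is the same one-line Leibniz computation $\nabla(hX)=\Gamma(h)X\in D_\lambda$. The only difference is that where the paper simply asserts ``such $\mu$ always exists'' (deferring the rectification-of-$\Gamma$ argument to the next proposition), you spell it out, and your substitution $f=e^{h}$ is a nice extra guarantee that the resulting section is nowhere vanishing.
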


\begin{proof}
In this case, from $[\nabla\Phi,\Phi](D_\lambda) = 0$ it follows that $\nabla Y = gY$, for all $Y\in D_\lambda$. Take now $X=\mu Y$, then $\nabla X = 0$ if and only if \[
\Gamma(\mu) + \mu g =0.
\]
Since such $\mu$ always exists, we may conclude the proof.

Conversely, when $D_\lambda$ contains an $X$ such that $\nabla X=0$, it follows for any other $Y=hX \in D_\lambda$ that $\nabla Y = \Gamma(h)X \in D_\lambda$.
\end{proof}

When we drop the condition on the dimension of $D_\lambda$, we still get:
\begin{proposition} \label{nablaXzeronew}
Let $\lambda$ be an eigenfunction of $\Phi$ such that  $[\nabla\Phi,\Phi](D_\lambda) = 0$. Then  $D_\lambda$ contains an eigen vector field $X$ for which $\nabla X=0$.
\end{proposition}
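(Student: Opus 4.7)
The plan is to upgrade the scalar ODE argument of Proposition~\ref{nablaXzeroold} to a linear system in $d = \dim D_\lambda$ unknowns. The starting point is the immediately preceding proposition: since $[\nabla\Phi,\Phi](D_\lambda) = 0$, the dynamical covariant derivative $\nabla$ preserves the eigendistribution $D_\lambda$.

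First, I would fix a local eigenframe $\{Y_1,\ldots,Y_d\}$ of $D_\lambda$. The inclusion $\nabla D_\lambda \subset D_\lambda$ then yields functions $g_a^b \in \cinfty{TM}$ on the domain of the frame, determined by $\nabla Y_a = g_a^b\,Y_b$. A candidate section $X = \mu^a Y_a \in D_\lambda$ is parallel if and only if, by the Leibniz property $\nabla(\mu^a Y_a) = \Gamma(\mu^a)Y_a + \mu^a \nabla Y_a$,
\[
\Gamma(\mu^b) + g_a^b\,\mu^a = 0, \qquad b = 1,\ldots,d,
\]
which is a linear first-order system on the open set of $TM$ where the frame is defined.

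Since the system is linear in the unknowns $\mu^a$, the method of characteristics reduces it to a linear system of ordinary differential equations along the integral curves of $\Gamma$. Prescribing a smooth, nowhere-vanishing initial datum $(\mu^1_0,\ldots,\mu^d_0)$ on a local hypersurface transversal to $\Gamma$, standard ODE theory delivers a unique smooth solution on a $\Gamma$-saturated neighbourhood of that hypersurface. Linearity together with uniqueness forces the components $\mu^a$ to be nowhere simultaneously zero, so $X = \mu^a Y_a$ is a non-trivial parallel section of $D_\lambda$, exactly as required. As a sanity check, specialising to $d=1$ reproduces the equation $\Gamma(\mu) + \mu g = 0$ appearing in the proof of Proposition~\ref{nablaXzeroold}.

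The main obstacle, relative to the one-dimensional case, is merely bookkeeping: instead of an integrating factor for a single scalar ODE one now needs existence of smooth solutions of a genuine $d \times d$ linear system, which is classical. The conceptual point is that no extra compatibility condition beyond $[\nabla\Phi,\Phi](D_\lambda) = 0$ is required to guarantee a parallel eigen vector field. I would also note that the construction is a priori only local on $TM$; promoting $X$ to a global section would require an additional topological argument, but the proposition as stated only asserts existence.
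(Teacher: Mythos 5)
Your proposal is correct and follows essentially the same route as the paper: both pass from $[\nabla\Phi,\Phi](D_\lambda)=0$ to $\nabla D_\lambda\subset D_\lambda$, expand $X=\mu^a Y_a$ in a local frame of $D_\lambda$, and solve the resulting linear system $\Gamma(\mu^b)+g_a^b\mu^a=0$ as an ODE initial value problem along the flow of $\Gamma$ (the paper phrases this via coordinates rectifying $\Gamma$, which is the same as your method of characteristics). Your added remarks on non-vanishing of the solution and on locality are sound refinements but do not change the argument.
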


\begin{proof} Let $\{X_a\}$ be a basis of $D_\lambda$, where $a=1,\ldots, d ={\rm dim}(D_\lambda)$. We know that from $[\nabla\Phi,\Phi](D_\lambda) = 0$ it follows that $\nabla X_a = A_a^bX_b$. If we set $X=\mu^a X_a$, then $\nabla X = 0$ if and only if 
\[
\Gamma(\mu^a) + A_a^b \mu^b=0, \qquad a=1,\ldots, d.
\]
If we consider (not-natural) coordinates $(x^1,\ldots,x^{2n})$ on $TM$ that rectify $\Gamma$, i.e.\ $\Gamma=\fpd{}{x^1}$, then this equation is of the type
\[
(\mu^a)' + A_a^b \mu^b=0, \qquad a=1,\ldots, d,
\]
where prime denotes derivatives in $x^1$. When all other coordinates $(x^2,\ldots,x^{2n})$ are thought of as parameters this reduces to an initial value problem. 
\end{proof}

The condition $[\nabla\Phi, \Phi](D_\lambda) =0$ that we have encountered in the above propositions will automatically be satisfied when $[\nabla\Phi,\Phi] =0$ (regardless $D_\lambda$). This condition is quite familiar in the context of \sodes. For example, it is one of the conditions for a \sode\ to belong to ``Case II'' of the inverse problem of Lagrangian mechanics (see e.g.\ \cite{CPST}), and it is one of many conditions for a \sode\ to be ``separable'' (see e.g.\ \cite{sep}).

\section{Locally symmetric \sodes} \label{symspace}

For a Riemannian metric $g$ with geodesic (quadratic) spray $\Gamma$, the relation between the Jacobi endomorphism of $\Gamma$ and the curvature of the Levi-Civita connection is   $\Phi(Y)=R(Y,\T)\T$ (for any `basic' $Y\in \vectorfields M$). Moreover (see Proposition~\ref{Prop2})  for any quadratic spray,  $\nabla\T=0$. Then 
\begin{eqnarray*}
\nabla\Phi(Y) &=&  \nabla(\Phi(Y)) - \Phi(\nabla Y)\\
 &=&  \nabla(R(Y,\T)\T) - R(\nabla Y,\T)\T\\
&=&  (\nabla R)(Y,\T,\T) + R(\nabla Y,\T,\T) - R(\nabla Y,\T)\T\\
&=&  (\nabla R)(Y,\T,\T). 
\end{eqnarray*}

Since $R$ is a `basic' tensor field on $M$ (viewed here `along $\tau$'), the condition $(\nabla R)(Y,\T,\T)=0$ is cubic in the fibre coordinates, i.e.\ of the type $P^i_{jklm}(q){\dot q}^j{\dot q}^k{\dot q}^lY^m$. The coefficients $P^i_{jklm}(q)$ are actually those of the tensor field $DR$, where $D$ is the Levi-Civitaconnection of $g$ (as a coordinate calculation easily confirms). We may therefore conclude that a Riemannian space is {\em locally symmetric} (i.e.\  $DR=0$) if and only if $\nabla\Phi=0$.

The condition $\nabla\Phi=0$ can also be satisfied for a \sode\ (without it being a Riemannian geodesic spray).
The condition $[\Phi,\nabla\Phi](D_\lambda)=0$ is then, of course, also satisfied.

\begin{proposition} \label{nablaPhizero} Let $\lambda$ be an eigenfunction of $\Phi$ such that  $[\nabla\Phi,\Phi] (D_\lambda) =0$. Then the following statements are equivalent:
\begin{enumerate} \item  $\nabla\Phi(D_\lambda) = 0$,
\item The eigenfunction $\lambda\in \cinfty{TM}$ is a first integral of $\Gamma$, i.e.\ $\Gamma(\lambda)=0$.
\end{enumerate} 
\end{proposition}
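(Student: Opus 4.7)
The plan is to reduce both sides to a single explicit formula by using the earlier proposition that, under the hypothesis $[\nabla\Phi,\Phi](D_\lambda)=0$, the eigendistribution $D_\lambda$ is $\nabla$-invariant, i.e.\ $\nabla D_\lambda\subset D_\lambda$.

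First I would take an arbitrary $X\in D_\lambda$, so that $\Phi(X)=\lambda X$. Expanding the definition of $\nabla\Phi$ as a commutator with $\Phi$ (or more precisely using that $\nabla$ acts as a derivation) gives
\[
(\nabla\Phi)(X)=\nabla(\Phi(X))-\Phi(\nabla X)=\nabla(\lambda X)-\Phi(\nabla X)=\Gamma(\lambda)X+\lambda\nabla X-\Phi(\nabla X).
\]
Now invoke the previous proposition: since $[\nabla\Phi,\Phi](D_\lambda)=0$, we have $\nabla X\in D_\lambda$, hence $\Phi(\nabla X)=\lambda\nabla X$. The last two terms cancel and one is left with the clean identity
\[
(\nabla\Phi)(X)=\Gamma(\lambda)\,X \qquad\text{for every } X\in D_\lambda.
\]

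The equivalence then follows immediately. If $\nabla\Phi(D_\lambda)=0$, the identity forces $\Gamma(\lambda)X=0$ for all $X\in D_\lambda$; since $D_\lambda$ is non-trivial this yields $\Gamma(\lambda)=0$. Conversely, if $\Gamma(\lambda)=0$, the identity shows $(\nabla\Phi)(X)=0$ for all $X\in D_\lambda$.

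There is no real obstacle here: the substantive work has already been carried out in the earlier proposition that establishes $\nabla D_\lambda\subset D_\lambda$ from the commutator condition. Once that invariance is available, the present statement is essentially the observation that on $D_\lambda$ the operator $\nabla\Phi$ acts as multiplication by the scalar $\Gamma(\lambda)$, so its vanishing on $D_\lambda$ is equivalent to $\lambda$ being a first integral of $\Gamma$.
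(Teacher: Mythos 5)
Your argument is correct and is essentially identical to the paper's own proof: both invoke the earlier equivalence to get $\nabla D_\lambda\subset D_\lambda$ from the commutator condition, then compute $(\nabla\Phi)(X)=\Gamma(\lambda)X$ on $D_\lambda$, from which the equivalence is immediate. Your write-up is in fact slightly more explicit than the paper's, which leaves the final equivalence step implicit.
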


\begin{proof}
We know that $\nabla Y \in D_\lambda$ for all $Y\in D_\lambda$. Thus
\[ (\nabla\Phi) (Y) = \nabla(\Phi(Y)) - \Phi(\nabla Y) = \Gamma(\lambda) Y + \lambda \nabla Y - \lambda \nabla Y = \Gamma (\lambda) Y
\]
\end{proof}

\begin{proposition} \label{Prop11}
Assume that $\Gamma$ is a \sode\ such that $\nabla\Phi=0$. Let $c$ be an integral curve of $\Gamma$. Then, $c(0)$ has conjugate points at $c\big(\frac{k\pi}{\sqrt{\lambda_c(0)}}\big)$, for each eigenfunction $\lambda$ with $\lambda_c(0)>0$.
\end{proposition}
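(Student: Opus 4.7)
The plan is to verify the two hypotheses of Proposition~\ref{basicprop} for the given base integral curve $c$ and each eigenfunction $\lambda$ with $\lambda_c(0)>0$, and then to directly invoke that proposition.

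First I would handle hypothesis (1), the constancy of the eigenvalue along $c$. Since $\nabla\Phi=0$ we certainly have $\nabla\Phi(D_\lambda)=0$, and of course $[\nabla\Phi,\Phi](D_\lambda)=0$ as well, so Proposition~\ref{nablaPhizero} applies and tells us that $\Gamma(\lambda)=0$, i.e.\ $\lambda$ is a first integral of $\Gamma$. Because the lifted curve $\dot c(t)$ is an integral curve of $\Gamma$ on $TM$, this immediately yields $\lambda_c(t)=\lambda(\dot c(t))=\lambda(\dot c(0))=\lambda_c(0)>0$ for all $t$ in the domain of~$c$.

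Next I would address hypothesis (2), the existence of a non-vanishing parallel eigen vector field along $c$. Since $[\nabla\Phi,\Phi](D_\lambda)=0$, Proposition~\ref{nablaXzeronew} guarantees the existence of an eigen vector field $X\in\vectorfields\tau$ lying in $D_\lambda$ with $\nabla X=0$. Setting $V(t):=X_c(t)=X(\dot c(t))$, one has $\nabla_c V(t)=(\nabla X)(\dot c(t))=0$ and $V(t)\in D_{\lambda_c(t)}$ automatically. To ensure $V$ is non-vanishing, I would exploit the freedom in the initial value problem inside the proof of Proposition~\ref{nablaXzeronew}: choosing the initial coefficients $\mu^a$ so that $X(\dot c(0))\neq 0$, the fact that the resulting $V(t)$ satisfies a linear first-order ODE with invertible parallel transport implies $V(t)\neq 0$ for all~$t$.

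With both hypotheses verified, Proposition~\ref{basicprop} applies verbatim and produces the Jacobi field $J(t)=A\sin(\sqrt{\lambda_c(0)}\,t)\,V(t)$ along $c$, which vanishes precisely at $t=k\pi/\sqrt{\lambda_c(0)}$ for $k\in\mathbb{Z}$. This identifies $c(k\pi/\sqrt{\lambda_c(0)})$ as conjugate points of $c(0)$. The only real subtlety in the argument is the non-vanishing requirement on $V$; everything else is a clean concatenation of the preceding propositions.
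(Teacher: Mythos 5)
Your argument is correct, and the first half (using Proposition~\ref{nablaPhizero} to get $\Gamma(\lambda)=0$ and hence the constancy of $\lambda_c$) is exactly what the paper does. Where you diverge is in producing the parallel eigen vector field. You invoke Proposition~\ref{nablaXzeronew} to obtain a section $X\in D_\lambda$ with $\nabla X=0$ and then restrict it to the curve; this is in fact the strategy the paper uses later for relative equilibria (Proposition~\ref{jacobiRE}), not the one it uses here. The paper's own proof of Proposition~\ref{Prop11} works entirely along $c$: it parallel-transports an eigenvector $w$ of $\Phi_v$ to get $W(t)$ with $\nabla_c W=0$, then uses $\nabla\Phi=0$ to deduce $\nabla_c(\Phi_c(W(t)))=0$, and concludes by uniqueness of solutions of the linear ODE that $\Phi_c(W(t))=\lambda_0 W(t)$, i.e.\ that $W$ stays in $D_{\lambda_c}$. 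The trade-off: the paper's construction needs only $\nabla\Phi=0$ restricted to $\dot c$ and avoids any appeal to the rectifying coordinates underlying Proposition~\ref{nablaXzeronew} (whose proof is a priori local on $TM$, so some care is needed to claim $X$ exists along the whole lifted curve --- harmless here since only the restriction to $\dot c$ matters, and that restriction is itself a global-in-$t$ linear ODE, but worth being aware of). On the other hand, your route makes the logical dependence on the earlier propositions cleaner, and you explicitly address the non-vanishing of $V(t)$ via linearity and uniqueness, a point the paper leaves implicit. Both proofs are valid.
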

\begin{proof}

We already know from the previous proposition that under these assumptions $\Gamma(\lambda) =0$, for each eigenfunction $\lambda$. Then, for each given base integral curve $c$ of $\Gamma$, the function $\lambda_c(t) = \lambda({\dot c}(t))$ remains constant along $c$ (i.e.\ $\lambda_c(t) =\lambda_c(0)$ for all $t$). When $\lambda_c(0)>0$, the first item of the conditions in Proposition~\ref{basicprop} is satisfied.

We now proceed with the construction of a $V(t)\in D_{\lambda_c}$ such that $\nabla_c V(t) =0$. Let $c(0)=m_0$ and ${\dot c}(0)=v$. Consider $w \in T_{m_0}M$ with $\Phi_v(w)=\lambda_0 w$. Consider its extension to a vector field $W(t)\in \vectorfields c$, as described before, with $\nabla_c W(t)=0$. Given that now $\nabla\Phi=0$, we also find that
\[
\nabla_c(\Phi_c(W(t))) = (\nabla_c\Phi_c) (W(t)) + \Phi_c(\nabla_c W(t)) = 0.
\] 
But, then $\Phi_c(W(t))$ is uniquely determined by its initial value, which is $\Phi_c(W(0)) = \lambda_0 w$. Since this is the same initial value of the vector field $\lambda_0 W(t)$, which also happens to satisfy $\nabla_c(\lambda_0 W(t))=0$, we may conclude that $\Phi_c(W(t)) = \lambda_0 W(t)$. So, $W(t)$ remains throughout a eigenvector with eigenvalue $\lambda_0=\lambda_c(t)$. This means that $W(t)\in D_{\lambda_c}$. All the conditions of Proposition~\ref{basicprop} are therefore satisfied.
\end{proof}

The above proposition extends a well-known result for locally symmetric spaces, when we specialize it to the case where $\Gamma$ is the geodesic spray of a Riemannian metric (see e.g. \cite{DoCarmo}, where it is an exercise).

\section{Conjugate points along relative equilibria} \label{sec6}

In this section, we assume that $\Gamma$ is a \sode\ with a connected symmetry Lie group $G$. We assume that the action $\psi: G \times M \to M$ of $G$ on $M$ is free and proper, from which it follows that $\pi^M: M \to M/G$ is a principal fibre bundle. The same then holds true for the induced tangent action $T\psi_g$ on $TM$, and for the bundle $\pi^{TM}: TM \to (TM)/G$. From the symmetry condition $TT\psi_g \circ \Gamma = \Gamma \circ T\psi_g$ it follows that there exists a {\em reduced} vector field $\gamma$ on $(TM)/G$, defined by
\[
\gamma \circ \pi^{TM}  = T\pi^{TM}\circ\Gamma.
\]

Let $\xi_M$ be the fundamental vector field on $M$, corresponding to $\xi \in\la$. Then, $\clift{\xi}_M$ is a fundamental vector field for the induced action on $TM$, by construction. Since we assume that the Lie group is connected, the invariance of $\Gamma$ under the $G$-action is equivalent with  $[\clift{\xi}_M,\Gamma]=0$, for all $\xi\in\la$.

\begin{proposition} Assume that $\Gamma$ is an invariant \sode. Let $\{Y_i\}$ be an invariant frame of vector fields on $M$. Let $\nabla Y_i = \lambda_i^j Y_j$ and $\Phi(Y_i) = {\phi}_i^j Y_j$, then $\lambda_i^j$ and $\phi_i^j$ are invariant functions on $TM$. 
\end{proposition}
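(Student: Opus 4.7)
The plan is to translate $G$-invariance into the infinitesimal condition that everything in sight is annihilated by Lie bracketing with $\clift{\xi}_M$, for every $\xi\in\la$. Since $G$ is connected, a function $f\in\cinfty{TM}$ is invariant if and only if $\clift{\xi}_M(f)=0$ for all $\xi$, so my goal reduces to showing
\[
\clift{\xi}_M(\lambda_i^j)=0 \qquad \text{and} \qquad \clift{\xi}_M(\phi_i^j)=0.
\]

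The key preliminary step is to observe that, since each $Y_i$ is invariant on $M$ (i.e.\ $[\xi_M,Y_i]=0$), the identities
\[
[\clift{\xi}_M,\vlift{Y}_i]=\vlift{[\xi_M,Y_i]}=0,\qquad [\clift{\xi}_M,\clift{Y}_i]=\clift{[\xi_M,Y_i]}=0
\]
hold, and consequently, using $\hlift{Y}_i=\onehalf(\clift{Y}_i-[\Gamma,\vlift{Y}_i])$, the Jacobi identity together with $[\clift{\xi}_M,\Gamma]=0$ gives $[\clift{\xi}_M,\hlift{Y}_i]=0$ as well. Thus $\clift{\xi}_M$ commutes with every $\vlift{Y}_i$ and $\hlift{Y}_i$; these together form an invariant frame on $TM$.

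With this in place I apply $[\clift{\xi}_M,\cdot\,]$ to the defining relation
\[
[\Gamma,\vlift{Y}_i]=-\hlift{Y}_i+\lambda_i^j\vlift{Y}_j.
\]
On the left, Jacobi with $[\clift{\xi}_M,\Gamma]=0$ and $[\clift{\xi}_M,\vlift{Y}_i]=0$ yields zero. On the right, expanding the bracket of $\clift{\xi}_M$ with $\lambda_i^j\vlift{Y}_j$ leaves only the derivative term $\clift{\xi}_M(\lambda_i^j)\vlift{Y}_j$, since $[\clift{\xi}_M,\vlift{Y}_j]=[\clift{\xi}_M,\hlift{Y}_i]=0$. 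Because $\{\vlift{Y}_j\}$ is linearly independent (the $Y_j$ form a frame), one concludes $\clift{\xi}_M(\lambda_i^j)=0$.

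An entirely parallel argument applied to
\[
[\Gamma,\hlift{Y}_i]=\lambda_i^j\hlift{Y}_j+\phi_i^j\vlift{Y}_j
\]
kills the left side by Jacobi and reduces the right side to $\clift{\xi}_M(\lambda_i^j)\hlift{Y}_j+\clift{\xi}_M(\phi_i^j)\vlift{Y}_j$; since $\clift{\xi}_M(\lambda_i^j)=0$ is already known and $\{\vlift{Y}_j\}$ is linearly independent, $\clift{\xi}_M(\phi_i^j)=0$ follows. There is no real obstacle — the only point that needs care is correctly identifying $\clift{\xi}_M$ as the fundamental vector field of the tangent-lifted action (so that invariance under $T\psi_g$ is equivalent to commutation with $\clift{\xi}_M$) and then exploiting that $\Gamma$, $\vlift{Y}_i$ and $\hlift{Y}_i$ are all invariant, so the only surviving terms after bracketing with $\clift{\xi}_M$ are precisely the derivatives of the coefficients one wants to show are constant along the group orbits.
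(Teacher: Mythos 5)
Your proposal is correct and follows essentially the same route as the paper: both reduce invariance to $\clift{\xi}_M(\cdot)=0$ via connectedness of $G$, establish $[\clift{\xi}_M,\hlift{Y}_i]=0$ from the formula $\hlift{Y}=\onehalf(\clift{Y}-[\Gamma,\vlift{Y}])$ together with the Jacobi identity and $[\clift{\xi}_M,\Gamma]=0$, and then read off the invariance of $\lambda_i^j$ and $\phi_i^j$ from the two bracket relations defining $\nabla$ and $\Phi$. The only difference is presentational: the paper phrases the conclusion as the invariance of the vector fields $(\nabla Y)^{\scriptscriptstyle\mathrm{V}}$ and $(\Phi(Y))^{\scriptscriptstyle\mathrm{V}}$, whereas you expand the brackets explicitly and invoke the pointwise linear independence of $\{\vlift{Y}_j\}$.
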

\begin{proof}
We only need to show that $\clift{\xi}_M(\lambda_i^j)=0$ and $\clift{\xi}_M(\phi_i^j)=0$ when $[\clift{\xi}_M,\Gamma]=0$.

Let $Y$ be an invariant vector field on $M$, then $[\xi_M,Y]=0$ and therefore also $[\clift{\xi}_M,\clift Y] = [ {\xi}_M, Y   \clift ]=0$. Likewise, using the Jacobi identity, we can show that $[\clift{\xi}_M,[\Gamma,\vlift Y]]=0$. Together with the relation $\hlift Y =\onehalf (\clift Y - [\Gamma,\vlift Y])$ 
  this means that also $[\clift{\xi}_M,\hlift Y]=0$, from which we may conclude that the horizontal lift of a basic vector field is invariant.

From $[\Gamma,\vlift Y] = -\hlift Y + (\nabla Y\vlift)
 $ it now follows that also $(\nabla Y\vlift)$ is an invariant vector field on $TM$. This means that the coefficients $\lambda_i^j$ in $\nabla Y_i = \lambda_i^j Y_j$ are invariant functions on $TM$.

 With that in mind, we know that also $(\nabla Y_i\hlift) = \lambda_i^j \hlift{Y}_j$ is an invariant vector field on $TM$. 
Again by making use of the Jacobi identity we may show that $[\clift{\xi}_M,[\Gamma,\hlift Y]]=0$. Together with $[\Gamma,\hlift Y]= (\nabla Y\hlift) +
(\Phi(Y)\vlift)$, we find that $(\Phi(Y)\vlift)$ is an invariant vector field. This means that the coefficients $\phi^i_j$ in $\Phi( Y_i) = \phi_i^j Y_j$ are invariant functions on $TM$.
\end{proof}

\begin{proposition} \label{Prop13}
Assume that $\Gamma$ is an invariant \sode. The eigenfunctions $\lambda(v) \in \cinfty{TM}$ of $\Phi$ are invariant functions.
\end{proposition}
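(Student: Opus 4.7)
The plan is to reduce the statement to the preceding proposition, which provides the invariance of the matrix components of $\Phi$ in an invariant frame, and then to extract invariance of the eigenvalues from invariance of the characteristic polynomial via a topological connectedness argument.

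Since the action of $G$ on $M$ is free and proper, $\pi^M\colon M\to M/G$ is a principal bundle, so around every point of $M$ there exists a $G$-invariant local frame of vector fields $\{Y_i\}$. Fixing such a frame, the preceding proposition yields that the components $\phi_i^j\in\cinfty{TM}$ defined by $\Phi(Y_i)=\phi_i^j Y_j$ are invariant on $TM$. In this frame the characteristic polynomial of the endomorphism $\Phi_v$ is simply that of the matrix $\bigl(\phi_i^j(v)\bigr)$, so
\[
P(\mu,v)=\det\bigl(\mu\,\mathrm{Id}-(\phi_i^j(v))\bigr)=\mu^{\dim M}+\sum_{k=0}^{\dim M-1}c_k(v)\,\mu^k,
\]
where each coefficient $c_k\in\cinfty{TM}$ is a universal polynomial expression in the $\phi_i^j$ and is therefore invariant as well. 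Given any eigenfunction $\lambda$ and any $g\in G$, invariance of the $c_k$ then gives
\[
P\bigl(\lambda(T\psi_g(v)),v\bigr)=P\bigl(\lambda(T\psi_g(v)),T\psi_g(v)\bigr)=0,
\]
so $\lambda(T\psi_g(v))$ belongs to the finite set $\mathrm{Spec}(\Phi_v)$ of eigenvalues of $\Phi_v$.

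To finish, I would fix $v$ and regard $g\mapsto\lambda(T\psi_g(v))$ as a continuous map from the connected Lie group $G$ into the finite, hence discrete, subset $\mathrm{Spec}(\Phi_v)\subset\R$. Such a map must be constant, and evaluating at $g=e$ forces $\lambda(T\psi_g(v))=\lambda(v)$. Since $v$ was arbitrary, this establishes invariance of $\lambda$.

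The substantive work has already been done by the preceding proposition; what remains is only the algebraic observation that the coefficients of a characteristic polynomial are invariant when the matrix is, together with the topological remark that a continuous function from a connected space into a discrete target is constant. The only delicate point is that a continuous deformation $g\mapsto\lambda(T\psi_g(v))$ could a priori jump between distinct eigenvalues of $\Phi_v$, but the discreteness of $\mathrm{Spec}(\Phi_v)$ in $\R$ rules this out cleanly.
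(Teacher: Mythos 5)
Your proof is correct, and its first half coincides with the paper's: both arguments rest on the preceding proposition, which gives invariance of the components $\phi_i^j$ of $\Phi$ in an invariant frame and hence invariance of the coefficients of the characteristic polynomial. Where you diverge is in how invariance of a root is extracted from invariance of the coefficients. The paper argues infinitesimally and by contradiction: assuming $\clift{\xi_M}(\lambda)\neq 0$ on an open set, it differentiates the characteristic equation repeatedly by $\clift{\xi_M}$, each time annihilating the invariant coefficients and lowering the degree of the surviving polynomial factor, until it reaches the absurdity $n!=0$; this yields $\clift{\xi_M}(\lambda)=0$ for all $\xi\in\g$, which for a connected group amounts to invariance. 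You instead argue globally and topologically: $\lambda(T\psi_g(v))$ is always a root of the fixed characteristic polynomial of $\Phi_v$, and a continuous map from the connected group $G$ into the finite, hence discrete, set of such roots is constant. Your route is shorter, avoids the iterated differentiation, and makes the role of connectedness of $G$ explicit rather than implicit. The one caveat is that the paper only asserts the eigenfunctions to be \emph{locally} defined smooth functions; your argument needs $\lambda$ to be defined along the whole orbit $\{T\psi_g(v):g\in G\}$, since otherwise the domain of $g\mapsto\lambda(T\psi_g(v))$ could be disconnected and the value could jump to a different eigenvalue on a component not containing $e$. The paper's formulation $\clift{\xi_M}(\lambda)=0$ is purely local and sidesteps this; your argument as written establishes invariance on the connected component of the identity in $\{g:T\psi_g(v)\in\mathrm{dom}(\lambda)\}$, which is all that is used later (constancy along relative equilibria), so the discrepancy is cosmetic rather than substantive.
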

\begin{proof} Suppose the statement is not correct, i.e.\ there exist a $v\in TM$ -- and by continuity an open subset $U$, containing $v$ -- on which $\clift{\xi_M}(\lambda)\neq 0$. We show that this leads to a contradiction. 

Consider again an invariant frame $\{Y_i\}$ of vector fields on $M$ and the invariant coefficients $\phi^i_j$ of $\Phi$ with respect to this frame. The eigenfunctions are the solutions of the characteristic equation, 
\[
\lambda^n + a_{n-1}\lambda^{n-1} + \ldots +a_1\lambda+a_0=0,
\]
where the functions $a_i$ can essentially be constructed from taking sums and products of the $\phi_i^j$. They are therefore also invariant functions. We may therefore consider a derivative of the above by $\clift{\xi_M}$, for an arbitrary $\xi\in\la$. We find:
\[
(n\lambda^{n-1} + (n-1)a_{n-1}\lambda^{n-2} + \ldots +a_1)\,\clift{\xi_M}(\lambda)=0,
\]
Since we assume that $\clift{\xi_M}(\lambda)\neq 0$, we must have that $n\lambda^{n-1} + (n-1)a_{n-1}\lambda^{n-2} + \ldots +a_1=0$. But then, we may take another derivative by $\clift{\xi_M}$, which will lead again to a product of a factor of order $n-2$ in $\lambda$ and the factor $\clift{\xi_M}(\lambda)$. By taking sufficient derivatives we end up with $n!=0$, which is clearly wrong. We  reach the conclusion that $\clift{\xi_M}(\lambda)=0$.
 \end{proof}

A base integral curve $c$ is a {\em relative equilibrium\/} of an invariant \sode\
$\Gamma$ if it coincides with an integral curve of a fundamental
vector field of the action of $G$ on $M$.  This means that the base integral curve is of the type  $c(t) = \exp(t\xi)m_0 = \psi_{\exp(t\xi)}(m_0)$, with $c(0)=m_0$ and ${\dot c}(0)=v$ ($\exp$ denotes here the exponential map of the Lie group $G$). Moreover, ${\dot c}(t) = T\psi_{\exp(t\xi)}(v)$. A relative equilibrium therefore projects under $\pi^M$ on an equilibrium of the reduced vector field $\gamma$. We refer the reader to e.g.\ \cite{CMreleq} (and the reference therein) for more details on relative equilibria.

\begin{proposition} \label{Prop14} Assume that $\Gamma$ is an invariant \sode. The eigenfunctions $\lambda_c(t)$ along a relative equilibrium $c(t) = \psi_{\exp(t\xi)}(m_0)$ are constant. 
\end{proposition}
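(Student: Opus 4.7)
The plan is to combine Proposition~\ref{Prop13} with the explicit formula $\dot c(t) = T\psi_{\exp(t\xi)}(v)$ recorded just before the statement. By Proposition~\ref{Prop13}, any eigenfunction $\lambda$ of $\Phi$ is $G$-invariant on $TM$, i.e.\ $\lambda \circ T\psi_g = \lambda$ for every $g \in G$. Applying this with $g = \exp(t\xi)$ yields
$$
\lambda_c(t) = \lambda(\dot c(t)) = \lambda\bigl(T\psi_{\exp(t\xi)}(v)\bigr) = \lambda(v) = \lambda_c(0),
$$
so $\lambda_c$ is constant, as claimed.

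An equivalent infinitesimal argument is also available and perhaps more in the spirit of the preceding sections: the lifted curve $\dot c(t)$ is the integral curve through $v$ of the fundamental vector field $\clift{\xi_M}$ on $TM$, so differentiating $\lambda_c(t) = \lambda(\dot c(t))$ via the chain rule gives $\dot\lambda_c(t) = \clift{\xi_M}(\lambda)(\dot c(t))$. This vanishes because the $G$-invariance supplied by Proposition~\ref{Prop13} is equivalent to $\clift{\xi_M}(\lambda) = 0$ (the infinitesimal reformulation is legitimate precisely because $G$ is assumed connected, as exploited throughout the section).

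I do not anticipate any genuine obstacle here. The substantive work has been carried out in Proposition~\ref{Prop13}, and Proposition~\ref{Prop14} is then an immediate corollary once one observes that the lifted relative equilibrium $\dot c(t)$ lies entirely within a single $G$-orbit in $TM$ and that $\lambda$ is constant on such orbits. The only mild point to record is the identification of $\dot c(t)$ with the orbit $T\psi_{\exp(t\xi)}(v)$, but this has already been established in the paragraph introducing relative equilibria.
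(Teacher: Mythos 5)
Your proposal is correct and follows exactly the paper's own argument: invoke the $G$-invariance of the eigenfunctions from Proposition~\ref{Prop13} and combine it with $\dot c(t) = T\psi_{\exp(t\xi)}(v)$ to get $\lambda_c(t)=\lambda(v)$ for all $t$. The additional infinitesimal reformulation is a harmless variant of the same idea.
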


\begin{proof}
Since the eigenfunctions are invariant, we may conclude that $\lambda(w) = \lambda(T\psi_g(w))$, for all $w\in TM$.
This means that, along a relative equilibrium, the functions
\[
\lambda_c(t) = \lambda({\dot c} (t)) = \lambda(T\psi_{\exp(t\xi)}(v))=\lambda(v)
\] 
are constant in $t$. We may therefore simply write them as $\lambda_0$ (for a fixed relative equilibrium).
\end{proof}

We now return to the results in Propositions~\ref{basicprop} and \ref{nablaXzeronew}. Suppose $\Gamma$ is such that $[\nabla\Phi,\Phi] = 0$.

We conclude:
\begin{proposition} \label{jacobiRE} Let $\Gamma$ be a $G$-invariant \sode\  that satisfies $[\nabla\Phi,\Phi] = 0$. Let $c$ be a relative equilibrium, starting at $c(0)=m_0$, with ${\dot c}(0)=v$. Suppose that there is an eigenfunction $\lambda$ which is strictly positive in $(m_0,v)$. Then, the points $c\big(\frac{k\pi}{\sqrt\lambda}\big) = \exp \left(\frac{k\pi}{\sqrt\lambda}\xi\right) m_0$ are conjugate  to $m_0$, for all $k\in {\mathbb Z}$.
\end{proposition}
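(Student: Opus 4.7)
The plan is to apply Proposition~\ref{basicprop} directly, by using the two already-established propositions of the previous section to verify its hypotheses along the relative equilibrium $c$, and then to rewrite the resulting conjugate times in group-theoretic form.

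First I would verify hypothesis (1) of Proposition~\ref{basicprop}. Since $\Gamma$ is $G$-invariant, Proposition~\ref{Prop14} asserts that every eigenfunction of $\Phi$ is constant along a relative equilibrium. Applied to the chosen $\lambda$, this gives $\lambda_c(t) = \lambda(\dot c(t)) = \lambda(v)$ for all $t$, so writing $\lambda_0 := \lambda(v)$ and invoking the positivity assumption $\lambda(v) > 0$, we obtain a constant, strictly positive eigenfunction along $c$.

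Next I would verify hypothesis (2). Because $[\nabla\Phi,\Phi]=0$ is assumed to hold globally, it holds in particular on the eigendistribution $D_\lambda$, and Proposition~\ref{nablaXzeronew} then provides an eigen vector field $X \in D_\lambda \subset \vectorfields\tau$ with $\nabla X = 0$. The proof of Proposition~\ref{nablaXzeronew} constructs $X$ by choosing initial data for a linear initial value problem in the $\Gamma$-rectifying coordinate, so the initial value may be chosen so that $X(v) \neq 0$. Setting $V(t) := X_c(t) = X(\dot c(t))$, we immediately get $\nabla_c V(t) = (\nabla X)(\dot c(t)) = 0$ and $\Phi_c(V(t)) = \Phi(X)(\dot c(t)) = \lambda_0 V(t)$, so $V(t) \in D_{\lambda_c}$. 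Since $V(t)$ solves a linear parallel-transport ODE with $V(0) = X(v) \neq 0$, it is nowhere vanishing, which closes the verification.

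Proposition~\ref{basicprop} then produces a Jacobi field along $c$ vanishing at $t=0$ and $t = k\pi/\sqrt{\lambda_0}$ for every $k \in \mathbb{Z}$, so $c(k\pi/\sqrt{\lambda_0})$ is conjugate to $m_0$. Using $c(t) = \exp(t\xi)m_0$ one rewrites this as $\exp\!\big((k\pi/\sqrt{\lambda_0})\xi\big) m_0$, which is the claim. The only delicate point in the whole argument is ensuring that the $X$ supplied by Proposition~\ref{nablaXzeronew} has a non-degenerate initial value on the fibre over $m_0$; once this is granted (which follows from the freedom in the initial data of the linear ODE in its proof), the rest is an immediate assembly of Propositions~\ref{basicprop}, \ref{nablaXzeronew} and \ref{Prop14}.
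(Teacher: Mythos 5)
Your proposal is correct and follows essentially the same route as the paper: invoke Proposition~\ref{nablaXzeronew} to obtain $X\in D_\lambda$ with $\nabla X=0$, restrict to the relative equilibrium, use the constancy of $\lambda$ along $c$ (Proposition~\ref{Prop14}), and apply Proposition~\ref{basicprop}. Your extra remark about choosing the initial data so that $X(v)\neq 0$ (hence $V(t)$ nowhere vanishing by linearity of the parallel-transport equation) is a point the paper leaves implicit, and is a worthwhile addition.
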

\begin{proof} Under the conditions in the statement, we know from Proposition~\ref{nablaXzeronew} that  there exist a $X\in D_\lambda$ with $\nabla X=0$. Its restriction $X_c$ to the relative equilibrium $c$ satisfies $\nabla_cX_c=0$. Since $\lambda(t)=\lambda_0>0$ is constant along $c$, the conditions for Proposition~\ref{basicprop} are satisfied.
\end{proof}

{\bf The special case $M=G$.} In view of the examples in the next section, we end this section by listing some coordinate expressions for the special case where the manifold $M$ is the Lie group $G$, and the action is given by left translations. In that case all right-invariant vector fields are infinitesimal generators of the action on $M=G$, and their complete lifts are the generators of the induces action on $TM=TG$. The manifold $(TM)/G$ can in this case be identified with the Lie algebra $\la$. 

We follow the notations of \cite{CMipLie, CMreleq}. Let $E_i$ be a basis of the Lie algebra $\la$, and ${\hat E}_i$ be the corresponding left-invariant  vector fields on $\la$. The invariant frame $\{Y_i={\hat E}_i\}$ forms a basis for $\vectorfields M$. We will use this frame to define coordinates on each tangent space $T_qM$. Let $v_q\in T_qQ$, then there exist coefficients $(w^i)\in \R^n$ (the  so-called  quasi-velocities) such that  $v_q=w^i{\hat E}_i(q)$. These quasi-velocities can be use to coordinatize $TM$.  The set  $\{\clift{\hat E}_i,\vlift{\hat E}_i\}$ forms a basis for $\vectorfields G$. A \sode\ can thus be expressed as
\[
\Gamma = w^i \clift{\hat E}_i + \gamma^i \vlift{\hat E}_i \in \vectorfields{TG},
\]
where $\gamma^i$ are invariant functions on $TG$ which can here be identified with functions on $\la$. The reduced vector field is then given by 
\[
\gamma = \gamma^i \fpd{}{w^i} \in \vectorfields{\la}.
\]
 In \cite{CMipLie}, the coefficients $\phi_j^l$ (as functions on $\g$) have been calculated, and are given by
\[
\phi_j^l  =
\onehalf\gamma^i \frac{\partial^2\gamma^l}{\partial w^i \partial
w^j} +\onehalf\gamma^i C^l_{ij}
-\oneq\fpd{\gamma^i}{w^j}\fpd{\gamma^l}{w^i} -\threeq
C^k_{ij}w^i\fpd{\gamma^l}{w^k} +\oneq w^iC^l_{ik}\fpd{\gamma^k}{w^j
}-\oneq w^m w^n C^k_{mj}C^l_{nk}.
\]
and
\[
\lambda^k_i = -\onehalf\left(\fpd{\gamma^k}{w^i}-w^jC^k_{ji}\right).
\]
The $C^k_{ij}$ herein are the structure constants of the Lie algebra, $[E_i,E_j]=C_{ij}^k E_k$.

The coefficients of the tensor field $\nabla\Phi$ in this frame, $(\nabla\Phi) ({\hat E}_i) = \psi^j_i {\hat E}_j$ are then given by
\[
\psi_i^j = \gamma(\phi_i^j) + \phi_i^k \lambda_k^j - \phi_k^j \lambda_i^k.
\]

\section{Examples and applications}

Most calculations in the example below were carried out with the help of Maple. 

\subsection{A worked-out example}

Let $(x,y)$ be coordinates on $\R^2$. We consider the system of second-order differential equations on $\R^2$ given by 
\[
\ddot x =- x,\qquad \ddot y =(\dot y +x\dot x)^3 -{\dot x}^2 +x^2 -1.
\]
Since this system is not quadratic in velocities, it falls out of the `classic' scope of Riemannian geometry. We use this example to clarify all the concepts we have introduced in the previous sections.

The corresponding \sode\ is 
\[
\Gamma= \dot x \fpd{}{x} + \dot y \fpd{}{y} - x \fpd{}{\dot x} + ((\dot y +x\dot x)^3 -{\dot x}^2 +x^2 -1)\fpd{}{\dot y}.
\] It has a symmetry vector field $\fpd{}{y}$, which corresponds to the fact that the additive group $\R$ is a symmetry group, with action $\psi: \R \times \R^2  \to\R^2, (\epsilon; x,y) \mapsto (x, y+\epsilon)$. We may therefore consider the reduced vector field $\gamma$. In this simple case, the coordinates on the reduced manifold $(T\R^2)/\R = (\R^2\times \R^2)/\R = \R^3$ can be given by $(x,w_1=\dot x, w_2=\dot y)$. In these coordinates $\gamma$ is 
\[
\gamma = w_1\fpd{}{x} -x \fpd{}{w_1} + ((w_2 +xw_1)^3 -w_1^2 +x^2 -1) \fpd{}{w_2}.
\] 
A relative equilibrium is an equilibrium of $\gamma$. Here there is only one, namely $(x=0,w_1=0,w_2=1)$. The corresponding full solution on the configuration manifold $\R^2$ that goes through $(0,0)$ is $c(t)=(x_0(t) = 0,y_0(t) = t)$. This base integral  lies on the $y$-axis. 

The Jacobi endomorphism is given by
\[
(\Phi^i_j) = \begin{bmatrix} 1 & 0 \\ \Phi^2_1 & 3(\dot y +x\dot x)( \frac{1}{4}(\dot y +x\dot x)^3-1) \end{bmatrix}.
\]
We have therefore eigenfunctions
\[
\lambda_1(x,y,\dot x,\dot y)=1,\qquad \lambda_2(x,y,\dot x,\dot y)=3(\dot y +x\dot x)( \frac{1}{4}(\dot y +x\dot x)^3-1),
\]
 which are clearly invariant under the action,  and whose eigenspaces are both one-dimensional. 

One easily verifies that $\nabla\Phi$ takes the form
\[
\begin{bmatrix} 0 & 0 \\ x(\nabla\Phi)^2_2 & (\nabla\Phi)^2_2 \end{bmatrix}.
\]
If we also take into account that  $\Phi^2_1 = \Phi^2_2 x -x$, it easily follows that $[\nabla\Phi,\Phi]=0$.

Along the relative equilibrium $c(t)$ we get  constant values for the eigenvalues: $\lambda_1=1>0$ and $\lambda_2 = -\frac{9}{4}<0$. Since the first is positive, all conditions of Proposition~\ref{jacobiRE} are satisfied and we find conjugate points along the relative equilibrium at parameter values $t=k\pi$. Consider $t=\pi$. The conjugate point is then $c(\pi) = (0,\pi)$.

For this example, it is not evident to write down a closed expression for arbitrary base integral curves. 
However, one easily verifies that the following family of curves, defines for each $s$ a base integral curve:
\[
c_s(t) = (x_s(t) = s\sin t, y_s(t) = t - \frac{1}{2}s^2 \sin^2 t).
\]
Each of these base integral curves start at $t=0$ in $(0,0)$ and go for $t=\pi$ through $(0,\pi)$. When $s=0$ we have the relative equilibrium. If we  put \[
J(t) = \frac{d}{ds} (c_s(t))\bigg\rvert_{s=0},
= \sin t \fpd{}{x}\mid_{c(t)},
\]
One easily verifies that $\Gamma^1_1 = \Gamma^2_1 = 0$, from which $\nabla\fpd{}{x} = 0$. Moreover $\Phi(\fpd{}{x}) = 1 \fpd{}{x}$. So,
\[
\nabla_c\nabla_c J(t) + \Phi(J(t)) = \nabla_c (\cos t \fpd{}{x}\bigg\rvert_{c(t)}) + \sin t \fpd{}{x}\bigg\rvert_{c(t)} = - \sin t \fpd{}{x}\bigg\rvert_{c(t)} +
\sin t \fpd{}{x}\bigg\rvert_{c(t)} = 0.\]
This shows that $J(t)$ is a Jacobi field. It is clear that this field vanishes at both $t=0$ and $t=\pi$.

Below is a plot of the solutions $c_s(t)$, for some $s\in [-1,1]$.
\[
\includegraphics[scale=0.3]{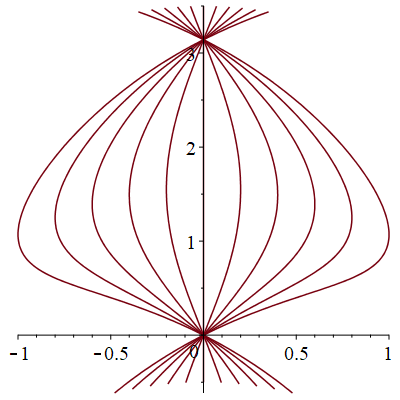}
\]

\subsection{Surfaces of revolution} 

We confirm here, by our methods, a result that appears in \cite{BCS} (who attribute it to H.\ Poincar\'e): The first conjugate point of a surface of revolution (under specific conditions, see later) is given by $\pi/\sqrt{K(0)}$, where $K(0)$ stands for the Gauss curvature at the initial point.

For a surface of revolution we may use the parametrization
\[
{\mathbf r}(\theta,\phi) = (f(\phi)\cos\theta,f(\phi)\sin\theta,g(\phi)), \qquad f>0,
\]
where $\phi$ is supposed to be arclength for the curve $\phi\mapsto (f(\phi),g(\phi))$ that is being rotated along the $z$-axis. An expression for the Gaussian curvature of this surface is then $K(\phi) = - \frac{f''(\phi)}{f(\phi)}$. 

We now consider its geodesics, as the solutions of the Euler-Lagrange equations of
\[
L = {\dot\phi}^2 + f^2(\phi) {\dot\theta}^2. 
\]
We also assume that geodesics are given by arclength, so that $L\equiv 1$. It is clear that this Lagrangian is invariant under the action $\theta \mapsto \theta +\epsilon$. The Euler-Lagrange equations and the condition on arclength are
\[
\ddot \phi = f(\phi)f'(\phi)\dot\theta,\qquad \ddot\theta  = -2 \frac{f'(\phi)}{f(\phi)}\dot\phi\dot\theta, \qquad {\dot\phi}^2 + f^2(\phi) {\dot\theta}^2 =1,
\]
or in invariant coordinates $( \phi, w_1=\dot\phi, w_2=\dot\theta)$,
\[
\dot\phi=w_1,\qquad {\dot w}_1 = f(\phi)f'(\phi)w_2,\qquad {\dot w}_2 = -2 \frac{f'(\phi)}{f(\phi)}w_1 w_2,\qquad w_1^2 + f^2(\phi) w_2^2=1.
\]
A relative equilibrium is therefore of the form $(\phi = \phi_0,w_1=0,w_2=\frac{1}{f(\phi_0)})$, where  $\phi_0$ is any constant that satisfies $f'(\phi_0)=0$.

As in \cite{BCS} we will assume in what follows that the `equator' $\phi_0=0$ is such that $f'(0)=0$. The corresponding curve that at $t=0$ goes through $(0,0)$ is then $(\phi(t) = 0,\theta(t) = \frac{1}{f(0)}t)$.

We now compute $\Phi,\nabla\Phi$ and $[\Phi,\nabla\Phi]$, for any choice of $(\theta,\phi,\dot\theta,\dot\phi)$. 

It turns out that, although $\nabla\Phi\neq 0$, the \sode\ is such that 
$[\Phi,\nabla\Phi] = 0$ (in any point, not only along relative equilibria). Since the full expressions of $\Phi$ and $\nabla\Phi$ contain many terms, we will only write down here their espressions in case of a specific example: For a torus we have $f=a+b\cos\phi$ and 
\[
\Phi = \frac{b\cos\phi}{f}\begin{bmatrix}  f^2{\dot\theta}^2 & - \dot\phi\dot\theta \\ -f^2 \dot\phi\dot\theta & {\dot\phi}^2 \end{bmatrix}, \qquad \nabla\Phi = \frac{ab\sin\phi \dot\theta}{f^2}\begin{bmatrix}  -f^2{\dot\theta}\dot\phi & - {\dot\phi}^2 \\ f^2{\dot\phi}^2  & -{\dot\theta}^2 \end{bmatrix}, 
\]
from which clearly also $[\Phi,\nabla\Phi] = 0$.

We may therefore be interested in the eigenvalues of $\Phi$. For  general $f(\phi)$, they take the form $\lambda=0$ (since $\Gamma$ is a spray) and 
\[
\lambda = -\frac{f''(\phi)}{f(\phi)} ({\dot\phi}^2 +f(\phi)^2 {\dot\theta}^2) = K(\phi), 
\]
where in the last equality we have used the fact that our curves are parametrized by arclength. Both eigenvalues have multiplicity 1. Along the equator, we conclude that the second eigenvalue is $K(0)$. Under the assumption that it is positive, we find conjugate points at times $\pi/\sqrt{K(0)}$. In the case of the torus (with $K(0) = \frac{b}{a+b}$) this result also  appears on page 190 of \cite{BCS2}.

\subsection{The free rigid body} 

The free rigid body is a Lagrangian system on $SO(3)$, that is invariant under the action of $SO(3)$ on itself. For that reason, the equations of motion are usually given in their reduced form on the Lie algebra $so(3)$. The so-called  Euler equations of
\[
l=\frac{1}{2} \left(I_1w_1^2+I_2w_2^2+I_3w_3^2\right)
\]
are the following equations: 
\begin{eqnarray*}
\dot{w}_1 & = & \frac{I_2-I_3}{I_1}w_2w_3, \\
\dot{w}_2 & = & \frac{I_3-I_1}{I_2}w_3w_1, \\
\dot{w}_3 & = & \frac{I_1-I_2}{I_3}w_1w_2.
\end{eqnarray*}
The right-hand sides of the above expressions define the vector field $\gamma$ on $\la=so(3)$.
  
A rotation matrix in Euler's coordinates is \[
R = \left( \begin{array}{ccc}
\cos\phi\cos\psi-\sin\phi\sin\psi\cos\theta & -\cos\phi\sin\psi-
\sin\phi\cos\psi\cos\theta & \sin\phi\sin\theta \\
\sin\phi\cos\psi+\cos\phi\sin\psi\cos\theta & -
\sin\phi\sin\psi+\cos\phi\cos\psi\cos\theta & -\cos\phi\sin\theta \\
\sin\psi\sin\theta & \cos\psi\sin\theta & \cos\theta
\end{array} \right).
\]
and the angular velocity is given by $\hat\omega= R^{-1} \dot R$, or by
\[ \left\{ \begin{array}{lcl}
w_1 & = & \dot{\theta}\cos\psi + \dot{\phi}\sin\theta\sin\psi , \\
w_2 & = & -\dot{\theta}\sin\psi + \dot{\phi}\sin\theta\cos\psi , \\
w_3 & = & \dot{\psi} + \dot{\phi}\cos\theta .
\end{array} \right. \]
The combination of the above, with the Euler equations gives in principle the Euler-Lagrange equations of the mechanical system on $SO(3)$ (i.e.\ before reduction).

From Euler's equations it is clear that the relative equilibria of the Euler equations are of the type $(w_1,w_2,w_3)=(\Omega_1,0,0)$, $(0,\Omega_2,0)$ or $(0,0,\Omega_3)$, where $\Omega_i$ can be any constant. 

We investigate here a relative equilibrium of the type $(0,0,\Omega)$, with $\Omega>0$ for convenience. Its base integral curve in $SO(3)$ through the unit is then $t\mapsto (\theta(t) =0  , \phi(t) =0, \psi(t) =\Omega t )$, or when written as a matrix:
\[
R(t) = \left( \begin{array}{ccc}
\cos(\Omega t) & -\sin(\Omega t)& 0 \\
\sin(\Omega t)  & \cos(\Omega t)  & 0 \\
0 & 0 & 1
\end{array} \right).
\]

Since we have an explicit expression of the reduced vector field $\gamma$, we may compute the coefficients of the  corresponding $\Phi$, $\nabla\Phi$ and $[\Phi,\nabla\Phi]$ by means of the expressions for $\phi^j_i$ and $\psi^j_i$ we gave in the previous section. 

It turns out that, after plugging in the relative equilibrium $(0,0,\Omega)$, the tensor $[\Phi,\nabla\Phi](0,0,\Omega)$ becomes 
\[\frac{\Omega^5}{2I_1^2I_2^2}\begin{bmatrix} 0 &   \frac{1}{I_2}(I_1 - I_2)^2(I_1+I_2 - I_3)^3 & 0\\ - \frac{1}{I_1}(I_1 - I_2)^2(I_1+I_2 - I_3)^3  & 0& 0\\0 & 0& 0\end{bmatrix}.
\]
There are therefore two cases where $[\Phi,\nabla\Phi](0,0,\Omega)$ vanishes: The case where $I_1=I_2$ (Euler top) and the case where $I_3=I_1+I_2$ (a flat rigid body). When any of these two cases is assumed, one finds that also $\nabla\Phi(0,0,\Omega) = 0$.

In the case where $I_1=I_2$ one may calculate that the Jacobi endomorphism has eigenvalue $\lambda_1=0$ and a double counted eigenvalue $\lambda_2=(\frac{I_3}{2I_1}\Omega)^2$. The eigendistribution of $\lambda_2$ (along the relative equilibrium) is spanned by the vector fields ${\hat E}_1$ and ${\hat E}_2$ (or: the vectors $E_1=(1,0,0)$ and $E_2=(0,1,0)$ when viewed as elements of the Lie algebra). 

The dynamical covariant derivative, when evaluated along the relative equilibrium $c(t)$ can be shown to have the behaviour: 
\[
\nabla_c {\hat E}_1(c(t)) = A {\hat E}_2(c(t)),\qquad \nabla_c {\hat E}_2(c(t)) = -A {\hat E}_1(c(t)),\qquad  \nabla_c {\hat E}_3(c(t)) = 0
\]
where $A= \Omega \frac{(2I_1-I_3)}{2I_1}$. For example, the vector field
\[
V(t) = \cos(At){\hat E}_1(c(t)) -\sin(At) {\hat E}_2(c(t)),
\] 
along $c(t)$ is a vector field in the eigendistribution of $\lambda_2$, that has the property that $\nabla_c V(t) =0$ (as predicted by Proposition~\ref{nablaXzeronew}).

We are therefore in the situation of Proposition~\ref{basicprop}, and we may conclude that, along the relative equilibrium $(0,0,\Omega)$, we have  a conjugate point at time $\frac{I_1}{I_3\Omega} 2\pi$.

This coincides with results in the literature (see e.g. \cite{BF,PS}). The conjugate time (for a general geodesic, not necessarily a relative equilibrium) is given in e.g.\ Theorem~1 of \cite{PS}. In their notations, our current situation $(0,0,\Omega) \in \la$ coincides with the conjugate momentum $p=(0,0,I_3\Omega) \in \la^*$. If we set $\bar p = \frac{p}{|p|} = (0,0,1)$, then ${\bar p}_3=1$. Theorem~1 then says that $\tau=\pi$, from which $t = \frac{2I_1\tau}{|p|} = \frac{I_1}{I_3\Omega} 2\pi$ is exactly as we found.

In the case where $I_3=I_1+I_2$ the Jacobi endomorphism has eigenvalue $\lambda=0$ and a double counted eigenvalue $\lambda=\Omega^2$. Along the relative equilibrium $(0,0,\Omega)$ we have therefore a conjugate point at time $\frac{\pi}{\Omega}$.

\subsection{The canonical connection on a Lie group} \label{seccancon} 

This is the (torsion free) linear connection on $G$ whose action on two left-invariant vector fields $X$ and $Y$ is given by 
\[
D_XY = \onehalf [X,Y]
\]
(see e.g.\ \cite{Thompson,Thompson2,CMipLie}).

In a basis of left-invariant vector fields we have $D_{{\hat E}_i} {\hat E}_j = \onehalf C_{ij}^k$. Due to the skew-symmetry in $C_{ij}^k$, the reduced equations are simply 
${\dot w}^i=C_{ij}^kw^iw^j=0$, and therefore $\gamma=0$. From this, it is clear that  $(w^i(t)=w^i_0)$, and that any element of the Lie algebra generates a relative equilibrium.

In \cite{Thompson2} it is shown that $R(X,Y)Z = \frac14 [Z,[X,Y]]$ (when $X,Y,Z$ are left-invariant vector fields) and that $D R = 0$. Similar to what we say in Section~\ref{symspace}, it follows from this that  $\nabla\Phi = 0$.

We use the classification of low-dimensional algebras given in \cite{Patera} to determine for which Lie algebras of dimension 3 the Jacobi endomorphism  $\Phi$ has positive eigenvalues. After some calculations one finds that only for the Lie algebras $A_{3,6}=e(2)$, $A_{3,8}=sl(2,\R)$ and $A_{3,9}=so(3)$ the Jacobi endomorphism  $\Phi$ has (at least one) positive eigenvalue. The last two algebras are semisimple, and the corresponding Killing form gives a bi-invariant metric whose Levi-Civita connection is the canonical connection.

The algebra $e(2)$, however, is not semisimple, and we will discuss below that there does not exist a Riemannian metric whose Levi-Civita connection is given by the canonical connection. This example, therefore, falls out of the scope of the methods from Riemannian geometry.

The bracket relations for $e(2)$ are $[E_1, E_2] = 0$, $[E_1, E_3] = -E_2$ and $[E_2, E_3] = E_1$. The corresponding connected Lie group is $SE(2)$, on which we may choose coordinates in such a way that an element  is given by
\[
\begin{bmatrix} \cos z & \sin z & x\\ -\sin z & \cos z & y\\ 0&0&1 \end{bmatrix}
\]
An element of $e(2)$ is then \[
\begin{bmatrix} 0 & w_3 & w_1\\ -w_3 & 0 & w_2\\ 0&0&0 \end{bmatrix}.
\]

The corresponding left invariant vector fields are 
\[
{\hat E}_1 = \cos z \fpd{}{x} + \sin z \fpd{}{y},\qquad {\hat E}_2 = -\sin z \fpd{}{x} + \cos z \fpd{}{y},\qquad {\hat E}_3 = \fpd{}{z}, 
\] 
from which 
\[
\left\{ \begin{array}{lll} \dot x &=& w_1 \cos z - w_2\sin z \\ \dot y &=& w_1\sin z + w_2 \cos z \\ \dot z &=& w_3   \end{array}\right.\quad
\Leftrightarrow \quad \left\{ \begin{array}{lll} w_1 &=& {\dot x} \cos z + {\dot y}\sin z \\ w_2 &=& -{\dot x} \sin z + {\dot y} \cos z \\ w_3 &=&\dot z     \end{array}\right. .
\]

The geodesic equations can be given in reduced form by 
\[
{\dot w}_1 = 0,\qquad {\dot w}_2 = 0,\qquad {\dot w}_3 = 0,
\]
or in the original coordinates as
\[
\ddot x = \dot y \dot z,\qquad \ddot y = -\dot x\dot z ,\qquad \ddot z = 0.
\]
The matrix $\Phi^i_j$ (in the coordinate frame) is here 
\[
\Phi = \frac{1}{4}\begin{bmatrix} {\dot z}^2 & 0 & 0 \\ 0 & {\dot z}^2 & 0\\ -{\dot x}{\dot z} & -{\dot y}{\dot z} & 0  \end{bmatrix}. 
\]
It is easy to see that there can not exist a Riemannian metric $g$ in this case. If that would be the case, then the geodesic equations would be Euler-Lagrange equations. That would mean that the \sode\ would have a solution for the so-called `inverse problem of Lagrangian mechanics'. The Riemannian metric $g$ and the \sode\ $\Gamma$ would then be related by the so-called Helmholtz conditions, see e.g. \cite{CPST,CMipLie,Thompson,Thompson2}. One of these conditions is that $g$ and $\Phi$ are related by
\[
g(\Phi(X), Y) = g(X,\Phi(Y)).
\]
In the current setting, the only non-vanishing conditions are
\[
-g_{11}{\dot x}  - g_{12}{\dot y}  - g_{13} {\dot z} = 0, \quad g_{12}{\dot x} - g_{22}{\dot y} - g_{23}{\dot z} = 0, \quad g_{13}{\dot z} + g_{11}{\dot x} + g_{12}{\dot y} = 0,\quad g_{23}{\dot z} + g_{12}{\dot x} + g_{22}{\dot y} = 0.
\]
It is clear that, in case of a Riemannian metric, $g_{ij}$ only depends on $x,y$ and $z$. Therefore, from the first equation we must get $g_{11}=g_{12}=g_{13}=0$, which can not happen for a non-degenerate metric. 

Thompson \cite{Thompson} has shown that the connection on $E(3)$ is variational, in the sense that there exist a local regular Lagrangian whose Euler-Lagrange equations are equivalent with the geodesic equations of the canonical connection. The  most general form of such a Lagrangian can be found in \cite{Thompson}, but one example is 
\[
L(x,y,z,\dot x,\dot y,\dot z) = \frac{1}{2\dot z}\left(({\dot y}^2-{\dot x}^2)\cos(z)+2{\dot x}{\dot y}\sin(z)\right)-\frac{1}{12}{\dot z}^4.
\] 
This confirms again that this example lies outside the realm of Riemann  (and even Finsler) geometry. Notwithstanding, our propositions  do provide an answer about conjugate points. 

One may calculate that the Jacobi endomorphism has the eigenvalues $\lambda = 0$ and $\lambda  = \frac{{\dot z}^2}{4}$ (double counted). If we consider the relative equilibrium $t \mapsto (x(t),y(t), z(t) = {\dot z}_0t )$ (through the unit $(x=0,y=0,z=0)$),
then the corresponding conjugate point is at $t =\frac{\pi}{\sqrt{\lambda}} = \frac{2\pi}{{\dot z}_0}$

We may also relate this result to the one we had found in Proposition~\ref{Prop6}. This proposition roughly says that  if the most general solution is given by $q(t,q_0,{\dot q}_0)$, then conjugate points are the points at instances $t_0$ such that the Jacobian 
\[
\fpd{q}{{\dot q}_0} (t_0,q_0,{\dot q}_0)
\]
is singular. 

In this case, we can actually solve the system of second-order differential equations in closed form. The most general solution that goes through the unit $(0,0,0)$ is given by 
\[
x(t) = \frac{{\dot x}_0}{{\dot z}_0}\sin({\dot z}_0t) - \frac{{\dot y}_0}{{\dot z}_0}\cos({\dot z}_0t)+\frac{{\dot y}_0}{{\dot z}_0},\qquad y(t) = \frac{{\dot x}_0}{{\dot z}_0}\cos({\dot z}_0t)  + \frac{{\dot y}_0}{{\dot z}_0}\sin({\dot z}_0t) - \frac{{\dot x}_0}{{\dot z}_0} ,\qquad z(t)= {\dot z}_0t, 
\]
where the integration constants are chosen in such a way that ${\dot x}(0)={\dot x}_0$, ${\dot y}(0) = {\dot y}_0$, ${\dot z}(0)={\dot z}_0$. With that, the above Jacobian becomes 
\[
\begin{bmatrix}  \frac{1}{{\dot z}_0}\sin({\dot z}_0t) & -\frac{1}{{\dot z}_0}\cos({\dot z}_0t) +\frac{1}{{\dot z}_0} &  \,\,\cdots\,\, \\ \frac{1}{{\dot z}_0}\cos({\dot z}_0t) -\frac{1}{{\dot z}_0} & \frac{1}{{\dot z}_0}\sin({\dot z}_0t) & \,\,\cdots\,\,\\
 0&0& t\end{bmatrix},
\]
with determinant $\frac{2t}{{\dot z}_0^2}\left(1 - \cos({\dot z}_0t)\right)$. Besides at $t=0$, this  matrix is singular at times $t= \frac{2k\pi}{{\dot z}_0}$, which agrees with what we had found before. 
The point through which all geodesics go at time $t=\frac{2\pi}{a}$ is  $( 0, 0 , 2\pi)$.

\section{Outlook}

In this paper we have investigated Jacobi fields and conjugate points in the context of \sodes. Of course, also regular Lagrangian systems and the geodesic equations of a Finsler metric fall in this category. Our next goal is to see whether the methods we have developped here can also be applied to provide new results in those areas. 

In particular, we wish to investigate in the context of Lagrangian systems and in the context of  Finsler geometry the notions of geodesics `dispersing' and `bunching together' \cite{DoCarmo,BCS2}, the notion of Jacobi  stability for \sodes\ \cite{Sabau} and a possible extension of Rauch's theorem \cite{BCS2} to Lagrangian systems.

\vspace*{5mm}

{\bf Acknowledgements.}

We are indebted to Willy Sarlet for helpful comments on a first version of this manuscript. TM thanks the Research Foundation -- Flanders (FWO) for its support through Research Grant 1510818N and through the Excellence of Science Project G0H4518N. 

\bibliographystyle{plain}

\end{document}